\documentclass[12pt,oneside]{amsart}
\usepackage{amsmath}
\usepackage{amsthm}
\usepackage{amsfonts}
\usepackage{amssymb}
\usepackage{enumerate}
\newtheorem{theorem}{Theorem}

\newtheorem{proposition}[theorem]{Proposition}
\newtheorem{corollary}[theorem]{Corollary}

\theoremstyle{definition}

\newtheorem{example}[theorem]{Example}

\theoremstyle{remark}
\newtheorem{remark}[theorem]{Remark}
%

%%%%%%%%%%%%%%%%%%%%%%%%%%%%%%%%%%%%%%%%%%%%%%%%

\newcommand{\DD}{{\mathbb D}}

%%%%%%%%%%%%%%%%%%%%%%%%%%%%%

\DeclareMathOperator{\Aut}{Aut} 
 
 \DeclareMathOperator{\re}{Re}

\DeclareMathOperator{\im}{Im}

\DeclareMathOperator{\convexhull}{conv}

%%%%%%%%%%%%%%%%%%%%%%%%%%%%%

\renewcommand{\phi}{\varphi}

\hyphenation{plu-ri-po-lar} \hyphenation{pluri-sub-harmonic}
%Absolute value notation
%\newcommand{\abs}[1]{\lvert#1\rvert}

\subjclass[2010]{31A05, 32F45}

%%%%%%%%%%%%%%%%%%%%%%%%%%%%%%%%%%%%%%%%%%%%%%%%%%%%%%%%%%%%%%%%%%%%%%
%%%%%%%%%%%%%%%%%%%%%%%%%%%%%%%%%%%%%%%%%%%%%%%%%%%%%%%%%%%%%%%%%%%%%%

\begin{document}

\title[Complex geodesics in tube domains and harmonic mappings]{Complex geodesics in tube domains and their role in the study of harmonic mappings in the disc}

%\address{Carl von Ossietzky Universitat Oldenburg, Institut fur Mathematik, Postfach 2503, ¨
%D-26111 Oldenburg, Germany}

\address{Institute of Mathematics, Faculty of Mathematics and Computer Science, Jagiellonian
University,  \L ojasiewicza 6, 30-348 Krak\'ow, Poland}

%\author{Peter Pflug}\email{Peter.Pflug@uni-oldenburg.de}
\author{W\l odzimierz Zwonek}\email{wlodzimierz.zwonek@uj.edu.pl}
%\thanks{The paper was initiated while the second author was at the research stay at the Carl von Ossietzky University of Oldenburg supported by the Alexander %von Humboldt Foundation. The second Author was also partially supported by the OPUS grant no. 2015/17/B/ST1/00996 of the National Science Centre, Poland. }

%\thanks{The second author was  supported by the
%OPUS grant no. 2015/17/B/ST1/00996 financed by the National
%Science Centre, Poland.}
%The second author is partially supported by the grant of the Polish National Science Centre no. UMO--2011/03/B/ST1/04758.
\keywords{(convex) tube domains, complex geodesics, harmonic mappings, Rad\'o-Kneser-Choquet theorem, Schwarz Lemma for harmonic mappings}

\begin{abstract} We continue the research on the structure of complex geodesics in tube domains over (bounded) convex bases. In some special cases a more explicit form of the geodesics than the existing ones are provided. As one of the consequences of our study an effective formula for the Kobayashi-Royden metric in the tube domain $T_{\mathbb B_n}$ at the origin is given. The results on the Kobayashi-Royden metric in a natural way provide versions of the Schwarz Lemma for harmonic mappings.  We also present a result on harmonic mappings defined on the disc that may be seen as a generalisation of the Rad\'o-Kneser-Choquet Theorem for a class of harmonic bivalent mappings that lets understand better the geometry of complex geodesics in tube domains.
\end{abstract}
\maketitle

\section{Introduction} In a recent paper of F. Forstneric and D. Kalaj (\cite{For-Kal 2021}) a close relation between the theory of harmonic mappings defined on the disc into higher dimensional domains and the theory of complex geodesics in tube domains has been extensively used.  The relation is based on the classical property allowing the harmonic function to produce a holomorphic function with the real part equal to the given harmonic one. Consequently, the Lempert theory in the class of tube domains turned out to play a special role in the theory of minimal surfaces and the Schwarz Lemma for harmonic mappings. On the other hand a recent study of S. Zaj\c ac (\cite{Zaj 2015a}, \cite{Zaj 2015b}) who developed methods of Lempert for the convex tube domains made it possible to give a nice geometric description of complex geodesics in that class of domains which subsequently led the authors in \cite{Pfl-Zwo 2018} to determine a simple formula for the behaviour of the real part of the geodesics on the boundary. In this paper we continue the research from \cite{Pfl-Zwo 2018} which lets us understand better how complex geodesics in tube domains look like. For instance the study of the special case of $n=2$ provides a number of special types of complex geodesics which are determined by   harmonic mappings (being the real parts of complex geodesics) between the unit disc (denoted by $\mathbb D$) and the basis (denoted by $\Omega$) of the tube domain $T_{\Omega}$. Roughy speaking these harmonic mappings belong to one of the classes: 
\begin{itemize}
\item homeomorpisms of $\mathbb D$ onto the whole domain $\Omega$,\\
\item homeomorphisms of $\mathbb D$ onto one of two parts of $\Omega$ that are produced by division of $\Omega$ by antipodal lines,\\
\item two-to-one mapping of $\mathbb D$ onto a (non-convex) part of the domain $\Omega$,\\
\item harmonic mapping of a special form onto the segment determined by antipodal points.
\end{itemize}
The above characterization is given in Section~\ref{section:two-dimensional} (Proposition~\ref{prop:geodesics-two-dimensional}).
The better understanding of the case when the real part of the complex geodesic is a two-to-one (or bivalent) mapping from the list above follows from a result that could be seen as a generalization of the celebrated Rad\'o-Kneser-Choquet Theorem and is stated as Theorem~\ref{thm:harmonic-bivalent}. We find Theorem~\ref{thm:harmonic-bivalent} to be interesting in its own and we start the presentation of the proofs with that result in Section~\ref{section:rado-kneser-choquet}. We also cannot exclude that similar charaterizations of $p$-valent harmonic mappings may lead to generalizations of Theorem~\ref{thm:harmonic-bivalent}.

The calculation of the the Kobayashi-Royden metric in the tube domain $T_{\mathbb B_2}$ at the origin reduces to a kind of the infinitesimal version of the Schwarz Lemma for harmonic self-mappings of the unit disc. The problem of finding the effective formula for the Kobayashi-Royden metric $\kappa_{T_{\mathbb B_2}}(0;\cdot)$ was formulated explicitly in \cite{For-Kal 2021} and was earlier discussed in \cite{Bre-Ort-Seip 2021} and \cite{Kov-Yang 2020}. We present its solution probably in the simplest possible way by providing a parametrization of the boundary of the indicatrix and consequently we get a formula for $\kappa_{T_{\mathbb B_2}}(0;\cdot)$ which involves the inverses of elliptic integrals (that appear in the parametrization of the boundary of the indicatrix) as shown in Theorem~\ref{thm:formula-kobayashi-ball}. 

\subsection{Basic definitions, notations and results on Kobayashi-Royden metric} The (holomorphic) Schwarz Lemma when generalized to higher dimensions leads to a variety of holomorphically invariant functions and metrics; for example the Kobayashi-Royden pseudometric. As we deal in the paper only with the infinitesimal version of the Schwarz Lemma we restrict below to the introduction of that one invariant metric. 

Let us recall that for the domain $D\subset\mathbb C^n$ {\it the Kobayashi-Royden pseudometric} is given as:
\begin{equation}\label{eq:definition-kobayashi-royden}
\kappa_D(z;X):=\inf\{t>0: \exists f:\mathbb D\to D, f(0)=z, tf^{\prime}(0)=X\},
\end{equation}
$z\in D$, $X\in\mathbb C^n$.

For the domain $D$, $p\in D$ we denote {\it the (closed) indicatrix at $p$} by
\begin{equation}
I_D(p):=\{X\in\mathbb C^n:\kappa_D(p;X)\leq 1\}.
\end{equation}
Recall that $\kappa_{\mathbb D}(z;X)=\frac{|X|}{1-|z|^2}$, $z\in\mathbb D$, $X\in\mathbb C$. We also get that
any holomorphic $F:D_1\to D_2$ satisfies the inequality $\kappa_{D_2}(F(z);F^{\prime}(z)(X))\leq\kappa_{D_1}(z;X)$, $z\in D_1$, $X\in\mathbb C^n$ which may be seen as a version of the Schwarz Lemma for holomorphic mappings.

The case when the infimum in the definition (\ref{eq:definition-kobayashi-royden}) is attained lets us define the notion of complex geodesics and its left inverse for convex bounded domains. More precisely, a consequence of the Lempert Theorem (see \cite{Lem 1981}) is that for a bounded convex domain $D\subset\mathbb C^n$ and for any $z\in D$, $X\in\mathbb C^n$ we find holomorphic $f:\mathbb D\to D$ and $F:D\to\mathbb D$ such that $f(0)=z$, $\kappa_D(z;X)f^{\prime}(0)=X$ and $F\circ f$ is a holomorphic automorphism of $\mathbb D$ (we may equivalently require $F\circ f$ to be the identity). The mapping $f$ is called then {\it a complex geodesic} and $F$ its {\it left inverse}. Recall that in that situation $I_D(p)$ is convex. We recall that the results just mentioned extend to the domain $D$ being the tube domain over the basis being the bounded convex domain. In other words in the class of tube domains over bounded convex domains complex geodesics passing through a given point in arbitrary direction always exist. In this context it is worth to mention a recent result from \cite{Zim 2020} which shows that tube domains with strongly convex bounded convex bases  are never biholomorphic to convex domains. 

A good reference for the properties of holomorphically invariant functions, including the Kobayashi-Royden pseudometric, is a monograph \cite{Jar-Pfl 2013}.

\section{Some extension of the Rad\'o-Kneser-Choquet Theorem}\label{section:rado-kneser-choquet} As already mentioned we begin with presenting a result being a variation of the Rad\'o-Kneser-Choquet Theorem that is interesting in its own but it is motivated by the fact that we need it to complete the proof of Proposition~\ref{prop:geodesics-two-dimensional}. The standard form of the Rad\'o-Kneser-Choquet Theorem is the following one (it is taken from \cite{Dur 2004}). In the formulations below the harmonic extension of a continuous function $f:\mathbb T\to\mathbb C$ to $\overline{\mathbb D}$ is denoted by the same letter $f$  ($\mathbb T:=\partial\mathbb D$).

\begin{theorem}[Rad\'o-Choquet-Kneser Theorem] Let $f:\partial\mathbb D\to\Gamma$ be a homeomeorphism, where the boundary of the domain $D\subset\mathbb C$ is a Jordan curve $\Gamma$. If $D\subset \mathbb C$ is convex then the harmonic extension of $f$ is onto $D$ and univalent. 

In the case $D$ is not convex the theorem remains valid if we assume additionally that the extension of $f$ satisfies the inclusion $f(\mathbb D)\subset D$.
\end{theorem}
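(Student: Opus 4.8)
Write $F$ for the harmonic (Poisson) extension of $f$ to $\overline{\mathbb D}$; the plan is to reduce the entire statement to one local fact: that the Jacobian $J_F=|F_z|^2-|F_{\bar z}|^2$ never vanishes on $\mathbb D$. Granting this, $J_F$ has constant sign on the connected set $\mathbb D$; replacing $f$ by $\zeta\mapsto f(\bar\zeta)$ if necessary we may assume $f$ is orientation-preserving, and then the sign is $+$, so $F$ is a sense-preserving local diffeomorphism, in particular an open map. For $w_0\notin\Gamma$ the fibre $F^{-1}(w_0)$ is discrete (local injectivity) and closed in $\overline{\mathbb D}$, hence compact, and it avoids $\mathbb T$ because $f(\mathbb T)=\Gamma\not\ni w_0$; so it is a finite subset of $\mathbb D$. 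By the argument principle for sense-preserving maps (equivalently, Brouwer degree), the number of solutions of $F(z)=w_0$, each counted $+1$, equals the winding number of the closed curve $f$ about $w_0$: this is $1$ when $w_0\in D$ and $0$ when $w_0$ lies in the unbounded component of $\mathbb C\setminus\Gamma$. Combined with the inclusion $F(\mathbb D)\subseteq D$ (automatic for convex $D$, assumed otherwise — see below), this gives that every point of $D$ has exactly one $F$-preimage and no point of $\mathbb C\setminus\overline D$ is attained, i.e.\ $F$ is univalent on $\mathbb D$ with $F(\mathbb D)=D$.

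It remains to show $J_F\neq0$, and for convex $D$ this is the Choquet--Kneser lemma. First, $F(\mathbb D)\subseteq D$ follows from the Poisson representation: if $F(z_0)$ lay on a supporting line $\{\re(\bar\lambda w)=c\}$ of $D$, then $c=\int_0^{2\pi}P_{z_0}(\theta)\,\re\big(\bar\lambda f(e^{i\theta})\big)\,d\theta$ with integrand $\le c$ and Poisson kernel $P_{z_0}>0$, forcing $\Gamma\subseteq\{\re(\bar\lambda w)=c\}$ — impossible. Now suppose $J_F(z_0)=0$; normalise $z_0=0$. Then $dF(0):\mathbb R^2\to\mathbb R^2$ has rank $\le1$, so its image lies in a line with unit normal $\nu$, and $\phi:=\langle\nu,F\rangle$ is a real harmonic function on $\mathbb D$ with $\nabla\phi(0)=0$, non-constant since otherwise $F(\overline{\mathbb D})\supseteq\Gamma$ would lie in a line. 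A critical point of a non-constant harmonic function is a branch point of order $k\ge2$ of its level set, so near $0$ the set $\{\phi=\phi(0)\}$ consists of $2k\ge4$ analytic arcs through $0$, splitting a neighbourhood of $0$ into sectors on which $\phi-\phi(0)$ alternates sign. A maximum/minimum-principle argument then shows that these sectors lie in distinct components of $\mathbb D\setminus\{\phi=\phi(0)\}$ (a loop through $0$ joining two like-signed sectors would bound a region with $\phi=\phi(0)$ on its boundary but the other sign inside), that each such component reaches $\mathbb T$ (a component with boundary inside the nodal set would force $\phi$ constant), and that distinct components reach $\mathbb T$ along disjoint arcs; hence $\{\phi>\phi(0)\}\cap\mathbb T$ has at least two components. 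But $\phi(0)=\langle\nu,F(0)\rangle$ with $F(0)\in D$, so the line $\{\langle\nu,\cdot\rangle=\phi(0)\}$ crosses the convex set $D$, meeting $\Gamma$ in exactly two points and dividing it into two subarcs; as $f$ is a homeomorphism, $\{\phi>\phi(0)\}\cap\mathbb T$ is the $f$-preimage of one subarc, hence connected — a contradiction. So $J_F\neq0$, and with the first paragraph the convex case is complete.

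When $\Gamma$ is not convex, only $F(\mathbb D)\subseteq D$ is assumed, and the Choquet--Kneser argument breaks down: a line through the interior point $F(z_0)$ may meet the non-convex curve $\Gamma$ in more than two points, so $\{\phi>\phi(0)\}\cap\mathbb T$ need not be connected. Surjectivity nevertheless stays cheap — for $w_0\in D$ the winding number of $f$ about $w_0$ is $1\neq0$, so $w_0\in F(\overline{\mathbb D})$, and $w_0\notin\Gamma=F(\mathbb T)$ forces $w_0\in F(\mathbb D)$, whence $F(\mathbb D)=D$. Univalence, however, needs a genuinely finer analysis in which $F(\mathbb D)\subseteq D$ is used to preclude folding of $F$ inside $D$; this is classical (see \cite{Dur 2004} and its references to Kneser and Choquet), and I would reconstruct it by tracking, for $w_0\in D$, the two nodal sets $\{\re F=\re w_0\}$ and $\{\im F=\im w_0\}$ and showing — via the maximum principle together with the inclusion $F(\mathbb D)\subseteq D$ — that their only common point is the one prescribed by the boundary winding number.

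The main obstacle in both cases is the nodal-set bookkeeping. For convex $D$ the delicate point inside the Choquet--Kneser lemma is that the $\ge4$ nodal arcs issuing from a would-be critical point of $\phi$ really do yield $\ge2$ disjoint boundary arcs of fixed sign and cannot recombine in the interior — precisely where the maximum principle is applied region by region to subdomains cut out by nodal arcs and auxiliary curves through the critical point. For non-convex $D$ the obstacle is sharper, since the clean input ``a line meets $\partial D$ twice'' is lost and must be replaced by a quantitative exploitation of $F(\mathbb D)\subseteq D$; this is the step I would expect to cost the most.
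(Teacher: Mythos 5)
The paper does not prove this statement at all: it is quoted verbatim as a classical result from \cite{Dur 2004} (Section 3.1), so there is no internal proof to compare against; what can be judged is whether your reconstruction is correct. Your treatment of the convex case is complete and correct, and it is precisely the Choquet--Kneser argument that the paper's citation points to: the Poisson representation plus a supporting line gives $F(\mathbb D)\subset D$; a degenerate differential at $z_0$ produces a non-constant harmonic $\phi=\langle\nu,F\rangle$ with a critical point, hence $\ge 4$ alternating nodal arcs; the maximum principle forces at least two distinct components of $\{\phi>\phi(z_0)\}$, each meeting $\mathbb T$ in disjoint relatively open sets, so $\{\phi>\phi(z_0)\}\cap\mathbb T$ is disconnected; but convexity plus the boundary homeomorphism make it a single arc. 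Combined with the degree/argument-principle count (each preimage of $w_0\in D$ contributes $+1$ and the boundary winding number is $1$), this yields univalence and surjectivity. All the small points you need (finiteness of fibres, the loop-through-$z_0$ argument separating like-signed sectors, each sign component reaching $\mathbb T$) are handled or correctly indicated.

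The genuine gap is the second half of the statement. For non-convex $D$ you prove surjectivity (which indeed only needs the winding number and $F(\mathbb D)\subset D$), but univalence is not proved: you explicitly say it ``needs a genuinely finer analysis'' and offer only a plan (tracking the nodal sets $\{\re F=\re w_0\}$ and $\{\im F=\im w_0\}$ and showing they meet once). That plan is not yet an argument: once convexity is dropped, the line $\{\langle\nu,\cdot\rangle=c\}$ may cut $\Gamma$ in arbitrarily many points, so $\{\phi>c\}\cap\mathbb T$ can legitimately have many components and the contradiction you used to kill critical points of $J_F$ evaporates; the whole difficulty of Kneser's general case is to replace that step by an argument in which the hypothesis $F(\mathbb D)\subset D$ enters quantitatively, and you have not identified how. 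Since this is a quoted classical theorem rather than a result the paper claims to prove, the honest fix is either to carry out Kneser's general argument in full or to state plainly that the non-convex half is being cited from the literature rather than proved.
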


Below we present a result that could be described as a refinement of the Rad\'o-Chocquet-Kneser Theorem with the homeomorphisms on the boundary replaced by bivalent functions onto a proper subset of the boundary of the convex domain. We could not find the result in the literature so we present the proof that  follows the ideas of Kneser as presented in the book \cite{Dur 2004}, Section 3.1.  

\begin{theorem}\label{thm:harmonic-bivalent} Fix two non-trivial closed arcs $\gamma_1,\gamma_2\subset\mathbb T$ such that $\gamma_1\cap\gamma_2$ consists of two points (the arcs have common endpoints) $p$ and $q$ and $\gamma_1\cup\gamma_2=\mathbb T$. Let $\Gamma\subset\partial D$, $\Gamma\neq\partial D$  be a connected closed subset of the boundary of the convex bounded open set $D\subset\mathbb C$ with endpoints $x,y\in\Gamma$ and such that $\Gamma$ is not a segment. Let $f:\mathbb T\to \Gamma$ be a continuous mapping such that $f_{|\gamma_j}:\gamma_j\to\Gamma$ is a homeomorphism, $j=1,2$. Let $l(x,y)$ be the open segment joining $x$ and $y$. 

Then there is a convex open set $D_1\subset \convexhull(\Gamma)$ with $L:=l(x,y)\cap \overline{D_1}\subset\overline{D_1}$ such that $f(\mathbb D)=(\operatorname{int}\convexhull(\Gamma))\setminus D_1$, where $\convexhull(\Gamma)$ denotes the convex hull of $\Gamma$. Moreover, there is an analytic Jordan arc $\beta$ with endpoints $p$ and $q$ and with all other elements lying in $\mathbb D$ and thus dividing $\mathbb D$ into two domains $G_j$ such that $f(\beta)=\partial D_1\setminus L$, $f_{|G_j}:G_j\to f(\mathbb D)\setminus f(\beta)$ is univalent and onto, $j=1,2$ and $f_{|\beta}$ is a homeomorphism between $\beta$ and $f(\beta)$.
 \end{theorem}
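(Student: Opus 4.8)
The plan is to follow the classical Kneser argument for the Radó–Kneser–Choquet theorem, as in \cite{Dur 2004}, Section 3.1, but adapted to the bivalent situation. Write $f = u + iv$ with $u,v$ harmonic in $\mathbb D$, continuous on $\overline{\mathbb D}$. First I would set up the geometry: since $\Gamma$ is a connected proper closed subarc of $\partial D$ joining $x$ to $y$ and is not a segment, the set $\convexhull(\Gamma)$ is a convex body whose boundary consists of $\Gamma$ together with the segment $l(x,y)$ (possibly with parts of $l(x,y)$ lying on $\partial(\convexhull(\Gamma))$ and parts in its interior, depending on whether $\Gamma$ "bulges" past the chord). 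The candidate for $D_1$ will be the connected component of $\Int\convexhull(\Gamma) \setminus f(\overline{\mathbb D})$ adjacent to $L$, and the candidate for $\beta$ will be the preimage under $f$ of $\partial D_1 \setminus L$; the work is to show these are well-defined and have the stated properties.

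The core of the argument is a winding-number / degree count of Kneser type. Fix a point $w_0 \in \Int\convexhull(\Gamma)$ that is \emph{not} on the chord $l(x,y)$ and not in the (to-be-defined) excluded set $\overline{D_1}$; the claim is that $f - w_0$ has exactly two zeros in $\mathbb D$, one in each of the two arcs' "sides". To see this I would, exactly as Kneser does, consider for a supporting line $\ell$ of $\convexhull(\Gamma)$ through a boundary point the harmonic function given by the signed distance to $\ell$ composed with $f$; a minimum-principle argument shows $f(\mathbb D) \subset \convexhull(\Gamma)$ and, more delicately, rules out $w_0$ being omitted when $w_0$ is on the "$\Gamma$-side". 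The bivalence comes from the boundary data: as $e^{i\theta}$ traverses $\mathbb T$ once, $f(e^{i\theta})$ traverses $\Gamma$ \emph{twice} (once along $\gamma_1$, once along $\gamma_2$, in the same direction since both restrictions are homeomorphisms onto $\Gamma$), so the boundary winding number of $f - w_0$ about $0$ is $2$ for $w_0$ in the appropriate region and $0$ for $w_0 \in D_1$. Since $u,v$ are harmonic, the zeros of $f - w_0$ are isolated with positive local degree (the Jacobian of a sense-preserving harmonic map is $\geq 0$, and one argues it is not identically zero), so the count "$2$" forces exactly two simple zeros. This gives that $f$ is two-to-one onto $(\Int\convexhull(\Gamma)) \setminus D_1$ and that on $D_1$ it is zero-to-one, i.e. $f(\mathbb D) = (\Int\convexhull(\Gamma))\setminus D_1$; that $D_1$ is convex follows because its complement within $\convexhull(\Gamma)$ is $\overline{f(\mathbb D)}$, which one checks is the union of the chord region with the image, and convexity of $D_1$ is extracted from the support-line argument applied to the component cut off by $L$. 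The inclusion $L \subset \overline{D_1}$ is immediate from the construction since $L = l(x,y)\cap\overline{D_1}$ by definition and one must check $\overline{D_1}$ does meet $l(x,y)$, which holds because $\Gamma$ is not a segment so $\convexhull(\Gamma)$ strictly exceeds the chord.

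Finally, to produce the dividing arc $\beta$: the "doubled" boundary map means the two zeros of $f - w_0$, as $w_0$ ranges over $f(\mathbb D)\setminus f(\beta)$, split $\mathbb D$ into two pieces. I would let $\beta := f^{-1}(\partial D_1 \setminus L)$ and argue that $\beta$ is an analytic Jordan arc: near an interior point $z_0 \in \beta$, $f$ maps a neighbourhood to a neighbourhood of a boundary point $w_1 \in \partial D_1 \setminus L$ in a way that is locally like $z \mapsto z^2$ composed with a sense-preserving harmonic homeomorphism (the two sheets $G_1, G_2$ fold along $\beta$), which forces $\beta$ to be locally a real-analytic curve; its endpoints are $p,q$ because those are precisely the points of $\mathbb T$ where $\gamma_1$ and $\gamma_2$ meet and where $f$ maps to $\{x,y\} = \partial\Gamma$, the endpoints of $\partial D_1 \setminus L$ on the chord. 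That $\beta$ divides $\mathbb D$ into two domains $G_1, G_2$ with $f|_{G_j}$ a univalent map onto $f(\mathbb D)\setminus f(\beta)$, and $f|_\beta$ a homeomorphism onto $f(\beta)$, then follows from the degree-$2$ global count: each $G_j$ carries exactly one of the two zeros for every omitted-from-$\beta$ value, and $f|_{\overline{G_j}}$ restricted to the boundary arc $\gamma_j$ is the given homeomorphism onto $\Gamma$ while $f|_\beta$ is single-valued by the folding description.

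\emph{Main obstacle.} The delicate point—as always in Kneser-type arguments—is the minimum principle step that simultaneously shows $f(\mathbb D)\subset\Int\convexhull(\Gamma)$ and identifies exactly which points are omitted, i.e. pinning down $D_1$ as a genuine convex open set rather than something degenerate, and showing the local degree of $f-w_0$ is exactly $1$ at each zero (equivalently that $f$ does not "fold" anywhere except along $\beta$). Handling the chord $l(x,y)$ carefully, where $\Gamma$ may or may not touch it and where the harmonic map's behaviour is most constrained, is where the argument needs the hypothesis that $\Gamma$ is not a segment and the most care.
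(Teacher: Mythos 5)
Your overall architecture (define $D_1$ as a component of $\Int\convexhull(\Gamma)\setminus f(\overline{\mathbb D})$, set $\beta:=f^{-1}(\partial D_1\setminus L)$, finish by applying Rad\'o--Kneser--Choquet on each $G_j$) matches the paper's, but the engine you propose to drive it --- a winding-number/degree count --- does not work here, and this is a genuine gap. You assert that as $e^{i\theta}$ traverses $\mathbb T$ once, $f(e^{i\theta})$ traverses $\Gamma$ twice ``in the same direction'', giving boundary winding number $2$ about points of $f(\mathbb D)$. This is false: since $f|_{\gamma_1}$ and $f|_{\gamma_2}$ are homeomorphisms onto $\Gamma$ agreeing at the common endpoints $p,q$ (which map to the endpoints $x,y$ of $\Gamma$), continuity forces the second arc to retrace $\Gamma$ \emph{backwards}. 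The boundary curve $f|_{\mathbb T}$ is $\Gamma$ traversed forward and then backward, hence null-homotopic in $\Gamma$, and its winding number about \emph{every} point off $\Gamma$ is $0$. (This is precisely the distinction the paper draws, right after the theorem, between the ``two-fold winding up'' case $\varphi(2\pi)=\varphi(0)+4\pi$ and the present ``fold'' case with $\psi$ increasing then decreasing.) Correspondingly, your claim that each zero of $f-w_0$ has positive local degree fails: the harmonic extension cannot be sense-preserving throughout, since its Jacobian must change sign across the fold locus $\beta$; the two preimages of a generic $w_0\in f(\mathbb D)$ carry local degrees $+1$ and $-1$, so the signed count is $0$ and the argument principle cannot distinguish ``two preimages of opposite orientation'' from ``$w_0$ omitted''. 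The local model near $\beta$ is a Whitney fold, not $z\mapsto z^2$, so that part of your construction of $\beta$ also needs repair.

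The paper avoids this by running Kneser's level-set argument directly on the linear functionals $au+bv$: for each direction $(a,b)$ the connected component of the level set $\{au+bv=\mathrm{const}\}$ through a point is shown (using the bivalence of the boundary data) to be either a single analytic arc or two analytic arcs crossing orthogonally at a unique degeneracy point of $df$; one then shows $df$ never vanishes entirely, that degeneracy points have singleton fibres (via a maximum-principle argument in the sectors cut out by the two crossing arcs), that the lines orthogonal to $d_{z}f(\mathbb R^2)$ at degeneracy points supply supporting lines for $D_1$ (whence its convexity), and that $\beta$ is exactly the degeneracy locus. If you want to salvage your approach, you would need to replace the global degree count by this level-curve analysis, or by some other argument that controls the unsigned number of preimages; the winding number alone cannot do it.
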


\begin{remark} Let us make some remarks. The harmonic extension of $f$ onto $\overline{\mathbb D}$ maps $\overline{\mathbb D}$ into $\convexhull{\Gamma}$. The function $f$ is bivalent between $\mathbb T$ and $\Gamma$ (in the sense that the preimage of any element consists of at most two points). The only two points in $\Gamma$ having exactly one element in the preimage are the points $x,y$ (and the preimages are up to a permutation $p$ and $q$). One of the immediate consequences of the theorem is that $f$ is bivalent, too. Additionally, we may divide the unit disc by the Jordan arc $\beta$ into two subdomains with the common boundary in $\mathbb D$ such that the extension is a one-to-one mapping when restricted to the subdomains and on the common boundary of the subdomains it is a homeomorphism. It also follows from the theorem that the points lying  in $f(\beta)$ (being a part of the boundary of $f(\overline{\mathbb D})$) are precisely the images of points from $\beta$ and are also the degenaracy points of $df$. 

The mapping $\re f$ from the last case of Proposition~\ref{prop:geodesics-two-dimensional} satisfies the assumptions of the above theorem.
\end{remark}

Note that the above theorem shows a difference between the case when the extension of the harmonic mapping is a two-fold winding up of the circle onto a Jordan curve and the form of the bivalent mapping of the circle onto a proper part of the boundary of the bounded convex domain as considered in the theorem. The first case may be expressed on $\mathbb T$ by the formula $f(e^{it})=g(e^{i\varphi(t)})$, where $\varphi:[0,2\pi]\to\mathbb R$ is increasing and $\varphi(2\pi)=\varphi(0)+4\pi$ whereas the second one may be expressed by the formula $f(e^{it})=g(e^{i\psi(t)})$, where $\psi:[t_0,t_0+2\pi]\to\mathbb R$ is such that there is a $t_1\in(t_0,t_0+2\pi)$ such that $\psi_{|[t_0,t_1]}$ is increasing, $\psi_{[t_1,t_0+2\pi]}$ is decreasing, $\psi(t_0)=\psi(t_0+2\pi)$ and $\psi(t_1)<\psi(t_0)+2\pi$.

We draw readers' attention to papers \cite{She-Sma 1985}, \cite{Bsh-Hen-Lyz-Wei 2001}, \cite{Bsh-Lyz 2012} and references there to consult results in both a positive direction (i. e. the finiteness of the preimages of such mappings) and negative results which deliver a number of examples of harmonic mappings having for instance the higher valency in the disc than that in the boundary in the case that $f$ is the winding-up of the circle.

The idea of the proof below comes from the proof of Theorem Rad\'o-Choquet-Kneser as presented in \cite{Dur 2004}, Section 3.1. At some points the methods are almost identical whereas  at some places more subtle methods are needed to work in the situation presented in the theorem.

\begin{proof} 
%Note that the case when $\Gamma$ is a segment which is equivalent to the fact that the convex hull $\convexhull(\Gamma)$ is a segment, too, is trivial, so %assume the opposite. 
Note that the harmonicity of the extension of $f$ onto $\mathbb D$ and the form of $f=:u+iv$ on $\mathbb T$ imply that $f(\mathbb D)$ is closed in the interior of $\convexhull{(\Gamma)}$ and $L\cap f(\mathbb D)=\emptyset$.

Let $w_0=f(z_0)$ with $z_0\in\mathbb D$.

Proceeding as in \cite{Dur 2004} Section 3.1 but remembering that $f$ when restricted to the circle $\mathbb T$ is bivalent we get the following property: 

{\bf Property.} For any $(a,b)\in \mathbb R^2\setminus\{(0,0)\}$ the connected component $V$ of the set $\{z\in\mathbb D: au(z)+bv(z)=au(z_0)+bv(z_0)\}$ is either an analytic curve or it is a union of two analytic curves intersecting orthogonally at some $z_1\in\mathbb D$. Moreover, in both cases the analytic curves have no intersection in $\overline{\mathbb D}$ with the exception of the second case and the point $z_1$. The point $z_1$ is the only point $z$ from $V$ such that the derivative $d_zf$ vanishes at $(a,b)$ or the system of equations $au_x(z)+bv_x(z)=au_y(z)+bv_y(z)=0$ is satisfied.

%{\bf In the case $d_{z_0}f$ does not vanish at the non-trivial vector $(a,b)\in\mathbb R^2$ then the subset $V$ of $\mathbb D$ defined by the formula $au(z)%+bv(z)=au(z_0)+bv(z_0)$ is locally near $z_0$ the analytic curve passing through $z_0$. Take the connected component of $V$ containing $z_0$. Since the %critical points of harmonic mappings are  are isolated $V$ is the analytic curve. It does not have self-intersection in $\overline{\mathbb D}$ -- use the maximum %principle to get that the image of $f$ in such a case would be a segment.  }

Consider now the case when the differential $d_{z_0}f$ is degenarate, or $u_x(z_0)v_y(z_0)=u_y(z_0)v_x(z_0)$. Take any line $w_0+P$ passing through $w_0$ such that $P$ is orthogonal to the image $d_{z_0}f(\mathbb R^2)$. The line is defined by the formula $au+bv=au(z_0)+bv(z_0)$ where $(a,b)$ is a non-trivial solution of the system of linear equations  $au_x(z_0)+bv_x(z_0)=au_y(z_0)+bv_y(z_0)=0$. 
Then following the reasoning of Kneser as presented in \cite{Dur 2004} and using the fact that $f_{|\mathbb T}:\mathbb T\to\Gamma$ is a two-to-one mapping (with the exception of endpoints) we get that $w_0+P$ intersects $\Gamma$ at two points, both different from $x$ and $y$ and $(w_0+P)\cap\convexhull{\Gamma}\subset f(\overline{\mathbb D})$. Moreover, the connected component of $f^{-1}(w_0+P)$ containing $z_0$ is a union of two analytic curves intersecting at $z_0$ that have no self-intersection in $\overline{\mathbb D}$ and that intersect orthogonally at $z_0$.  

Note that the property just mentioned also implies that if the differential $d_{z_0}f$ would be trivial then all the lines passing through $w_0$ would have to touch $\Gamma$ at two points. This is impossible which means that $d_{z_0}f$ can never vanish.

The two analytic curves $C_1(z_0)$, $C_2(z_0)$ associated with the degenarate point $z_0$ split into four analytic arcs $A_j(z_0)$ starting at $z_0$ in $\mathbb D$, $j=1,2,3,4$. We claim that $f$ restricted to $C_j(z_0)$ is injective. Actually if that were not the case then it would deliver an element $z_1\neq z_0$ on the curve $C_j(z_0)$ such that $d_{z_1}f$ would vanish at the vector tangent to the curve at $z_1$. This however would imply that the curve $C_j(z_0)$ branches at $z_1$ which is impossible.

Assume that $f(z_1)=w_0$ for some $z_1\neq z_0$. Then the injectivity of $f$ on the curves implies that $z_1$ cannot lie on any arc $A_j$ so it must lie in the interior of one of the sectors bounded by $A_j$'s and $\mathbb T$. The maximum principle together with the bivalence of $f$ on $\mathbb T$ would however imply that the values of the interior of the sector all lie either below or all lie above the line $w_0+P$ - but the value $w_0=f(z_1)$ lies on the line $w_0+P$ so the maximum principle applied once more would imply that the image of the whole $f$ would lie in $w_0+P$ -- a contradiction. Consequently, for any $z_0\in\mathbb D$ such that $d_{z_0}f$ is degenerate we get that $f^{-1}(f(z_0))=\{z_0\}$.

Let $D_1$ be the interior of the connected component of $\convexhull{(\Gamma)}\setminus f(\mathbb D)$. Note that the closure of $D_1$ contains a neighborhood of $\operatorname{int}L$ in $\convexhull{\Gamma}$. We show that $D_1$ is convex. Actually, take any point $w_1\in\partial D_1$. It is sufficient to show that $D_1$ has a supporting line at $w_1$. The only non-trivial case is when $w_1\in\operatorname{int}\convexhull{\Gamma}$. So fix such a point.
Then there is a point (exactly one!) $z_1\in\mathbb D$ such that $f(z_1)=w_1$ and the differential $d_{z_1}f$ is degenarate. The earlier reasoning implies the existence of a line $P$ with $(w_1+P)\cap\convexhull{\Gamma}\subset f(\overline{\mathbb D})$ which delivers the existence of a suitable supporting line. 

We already know that the preimages of elements of $\partial D_1\setminus L$ consist of exactly one element. This lets us define $\beta:=f^{-1}(\partial D_1\setminus L)$. We also get in the standard way that $f^{-1}$ is continuous on $f(\beta)$. The convexity of $D_1$ also gives the continuity of the mapping $f^{-1}$ up to the boundary of $f(\beta)$. Then the fact that the endpoints of $\beta$ are endpoints of $\Gamma$ and their images under $f^{-1}$ are the points $u,v$ gives the division of $\mathbb D$ into the claimed subdomains $G_j$.

Now we make use of the general version of the Rad\'o-Choquet-Kneser Theorem for the function $f$ composed with the Riemann mapping between $\mathbb D$ and $G_j$ to finish the proof.

\end{proof}

%A direct consequence of the above result together with the form of complex geodesics in tube domains over bases symmetric around zero is the following.
 
%\begin{corollary}
%Let $\Omega\subset\mathbb R^2$ be a $C^2$-strongly convex bounded domain symmetric around zero. Let $f$ be a complex geodesic in $T_{\Omega}$ %joining $0$ and $0+iw$ for some $w\in\mathbb R^2$. Then $f$ is (up to a M\"obius mapping) of the form as in Remark~\ref{remark:antipodal-geodesic} with %$y=-x$ for some $x\in\partial\Omega$.
%\end{corollary}

\section{Complex geodesics in tube domains over strongly convex bounded bases}
\subsection{The general case}
The results of S. Zaj\c ac (\cite{Zaj 2015a}, \cite{Zaj 2015b}) on the Lempert theory (see \cite{Lem 1981}) applied to convex tube domains allowed the authors in \cite{Pfl-Zwo 2018} to get a complete and in some sense effective characterization of complex geodesics in sufficiently regular convex tube domains which we recall below. The form we recall is the starting point for our further research that leads to a number of results that present properties of different types of complex geodesics in tube domains with regular bases.

First let us introduce basic notations. For a domain $\Omega\subset\mathbb R^n$ denote {\it the tube domain over the basis $\Omega$} as
\begin{equation}
T_{\Omega}:=\Omega+i\mathbb R^n\subset\mathbb C^n.
\end{equation}
Unless otherwise stated throughout the paper we assume additionally that $\Omega$ has $C^k$ boundary ($k\geq 2$) and is strongly convex, which implies in particular its convexity. 
%Note that in such a case we have the following properties. we get $\kappa_{D_2}(f

For $x\in\partial\Omega$ denote $\nu_{\Omega}(x):=\frac{\nabla\rho(x)}{||\nabla\rho(x)||}$, where $\rho$ is the defining function of $\Omega$ near $x$, {\it the unit outer normal vector to $\partial \Omega$ at $x$}. This gives us {\it the Gauss map } ($\mathbb S^{n-1}$ denotes the unit sphere and $\mathbb B_n$ denotes the unit Euclidean ball in $\mathbb R^n$)

\begin{equation}
\Phi:\partial \Omega\owns x\to\nu_{\Omega}(x)\in \mathbb S^{n-1}\subset\mathbb R^n,
\end{equation}
which is $C^{k-1}$-smooth, injective and onto (here we need the strong convexity of $\Omega$, its boundedness and smoothness!). 
Consequently, $\Phi$ is a $C^{k-1}$-diffeomorphism.

Below we see that in this setting we have a complete characterization of complex geodesics in $T_{\Omega}$.
% as the mappings having the boundary values as the ones presented below.

Following the results in \cite{Zaj 2015a}, \cite{Zaj 2015b} one concluded in Section 4.3 in \cite{Pfl-Zwo 2018} (see also a similar description in \cite{Blo 2014}) that complex geodesics $f:\mathbb D\to T_{\Omega}$ are determined by the existence of $a\in\mathbb C^n$, $a\neq 0$ and $b\in\mathbb R^n$ such that:

%$h$
%such that $\bar\lambda h(\lambda)=2\re(a\lambda)+b$, $\lambda\in\mathbb T$ with $a\in\mathbb C^n$, $a\neq 0$, $b\in\mathbb R^n$. More precisely, in %such a case 
for all but at most two points $\lambda\in\mathbb T$ (when the denominator equals zero) 
we have
\begin{equation}
\re f(\lambda)=
%\Phi^{-1}\left(\frac{\bar\lambda h(\lambda)}{||\bar\lambda h(\lambda)||}\right)=
\Phi^{-1}\left(\frac{2\re(\lambda a)+b}{||2\re(\lambda a)+b||}\right).
\end{equation}
Recall that the Poisson formula allows us to determine the values of $f$ in $\mathbb D$ by the following formula
\begin{equation}\label{eq:poisson-formula}
f(\lambda)=\frac{1}{2\pi}\int_0^{2\pi}\frac{e^{it}+\lambda}{e^{it}-\lambda}\Phi^{-1}\left(\frac{2\re(\lambda a)+b}{||2\re(\lambda a)+b||}\right)dt+i\im f(0),\;\lambda\in\mathbb D.
\end{equation} 
The above formula let us state in \cite{Pfl-Zwo 2018} that the behaviour of the projection onto $\mathbb S^{n-1}$ of the following mapping (generically parametrization of the ellipse) is crucial in the context of complex geodesics:
\begin{equation}
\tilde F:\mathbb T\owns\lambda\to 2\re(\lambda a)+b\in\mathbb R^n.
\end{equation}
Let us denote the projection onto $\mathbb S^{n-1}$ by $\pi$: $\pi(x):=\frac{x}{||x||}$, $x\in\mathbb R^n\setminus\{0\}$. 

The assumption $\re f(0)\in\Omega$ implies that the projection of the image of the above mapping onto $\mathbb S^{n-1}$ is not a singleton.

Denote $F:=\pi\circ \tilde F:\mathbb T\to\mathbb S^{n-1}$ which is well defined for all but at most two elements of $\mathbb T$.
%\begin{equation}
%F:\mathbb T\owns\lambda\to\frac{\bar\lambda h(\lambda)}{||\bar\lambda h(\lambda)||}\in\mathbb S^{n-1}.
%\end{equation}
\begin{remark} As we saw above the boundary behavior of real parts of complex geodesics in convex tube domains is well understood. First we project by $\pi$ the ellipses (in singular case the intervals) on the unit sphere. Then we transform it by the mapping $\Phi^{-1}$. This gives us the geometric structure of boundaries of real parts of complex geodesics. Certainly, the mapping $\re f:\mathbb D\to\Omega$ is also harmonic, which by the Cauchy-Riemann equations determines the complex geodesic $f$, up to a purely non-real constant $i \im f(0)$.
\end{remark}

\subsection{Special cases}
\begin{remark} 
%Recall that when $f:\mathbb D\to T_{\Omega}$ is a complex geodesic then the mapping $\re f:\mathbb D\to \Omega$ is a harmonic mapping. 
Note that the linear subspace spanned by the set 
\begin{equation}
\left \{\frac{\re(\lambda a)+b}{||\re(\lambda a)+b||}:\lambda\in\partial \mathbb D\right\}\subset\mathbb S^{n-1}\subset\mathbb R^n
\end{equation}
is at most three-dimensional. Consequently, in the case of $\Omega=\mathbb B_n$ we may define the at most three-dimensional linear subspace determined by the boundary values of real part of the complex geodesic $f$ (with the associated $a,b$) by $V$. The harmonicity of $\re f$ implies then that $\re f(\mathbb D)\subset V$. So the problem of describing the complex geodesics of $T_{\mathbb B_n}$ reduces to the three-dimensional case $T_{\mathbb B_3}$.  This follows easily by applying suitable automorphisms of $T_{\mathbb B^n}$ determined by the real unitary mapping $U$ which lets us map $V$ into $\mathbb R^3\times\{0\}^{n-3}$. This means in particular that when trying to understand how complex geodesics in $T_{\mathbb B^n}$ look like we may restrict ourselves to the case $n=3$.
\end{remark}

\begin{proposition} Assume that the $C^2$-strongly convex domain $\Omega\subset\mathbb R^n$ is symmetric around zero, i. e. $\Omega=-\Omega$. Then all (real parts of) the complex geodesics in $T_{\Omega}$ passing through $0$ are determined by the ones with $b=0$, i. e. any holomorphic geodesic $f:\mathbb D\to\Omega$, with $f(0)=0$ satisfies
\begin{equation}
\re f(\lambda)=\Phi^{-1}\left(\frac{\re(\lambda a)}{||\re(\lambda a)||}\right),\;\lambda\in\mathbb T,
\end{equation}
where $a\in\mathbb C^n\setminus\{0\}$.
\end{proposition}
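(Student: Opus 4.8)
The plan is to show that for a symmetric domain $\Omega = -\Omega$, any complex geodesic $f$ of $T_\Omega$ with $f(0)=0$ can be replaced (after modifying the associated parameters $a,b$) by one with $b=0$. The key geometric input is that symmetry $\Omega=-\Omega$ forces the Gauss map to be odd: $\Phi(-x) = -\Phi(x)$ for $x\in\partial\Omega$, hence $\Phi^{-1}(-\xi)=-\Phi^{-1}(\xi)$ for $\xi\in\mathbb S^{n-1}$. I would first record this observation, since it is the only place the hypothesis $\Omega=-\Omega$ enters.

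Next I would use the characterization of complex geodesics recalled in the excerpt: $f$ is a geodesic of $T_\Omega$ through $0$ iff there exist $a\in\mathbb C^n\setminus\{0\}$, $b\in\mathbb R^n$ with
\begin{equation}
\re f(\lambda) = \Phi^{-1}\!\left(\frac{2\re(\lambda a)+b}{\|2\re(\lambda a)+b\|}\right)
\end{equation}
for all but at most two $\lambda\in\mathbb T$, together with the normalization $\re f(0)=0$, which via the Poisson formula \eqref{eq:poisson-formula} reads
\begin{equation}
\frac{1}{2\pi}\int_0^{2\pi}\Phi^{-1}\!\left(\frac{2\re(e^{it} a)+b}{\|2\re(e^{it} a)+b\|}\right)dt = 0.
\end{equation}
The idea is to exploit the antipodal symmetry $t\mapsto t+\pi$ of the parametrization: replacing $\lambda$ by $-\lambda$ sends $2\re(\lambda a)$ to $-2\re(\lambda a)$. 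So I would consider the geodesic $g(\lambda) := -f(-\lambda)$, whose boundary values are $\re g(\lambda) = -\re f(-\lambda) = -\Phi^{-1}\big((- 2\re(\lambda a)+b)/\|\cdots\|\big) = \Phi^{-1}\big((2\re(\lambda a)-b)/\|\cdots\|\big)$ using oddness of $\Phi^{-1}$. Thus $g$ is again a geodesic through $0$, associated with the pair $(a,-b)$.

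Now both $f$ and $g$ are complex geodesics of the bounded convex domain $T_\Omega$ passing through $0$ in directions proportional to $f'(0)$ and $g'(0) = f'(0)$ respectively (the derivative at $0$ is unchanged since $g'(0) = (-f(-\lambda))'|_{\lambda=0}=f'(0)$). By the uniqueness part of the Lempert theorem — complex geodesics through a point in a given direction are unique up to automorphisms of $\mathbb D$ fixing $0$, i.e. rotations — $f$ and $g$ differ by such a rotation; combined with $f(0)=g(0)=0$ and $f'(0)=g'(0)$ one concludes $f=g$, i.e. $f(\lambda) = -f(-\lambda)$. Feeding this back into the boundary formula gives $\Phi^{-1}\big((2\re(\lambda a)+b)/\|\cdots\|\big) = \Phi^{-1}\big((2\re(\lambda a)-b)/\|\cdots\|\big)$ for a.e. $\lambda$, hence by injectivity of $\Phi^{-1}$ the projections of $2\re(\lambda a)+b$ and $2\re(\lambda a)-b$ onto $\mathbb S^{n-1}$ coincide for all $\lambda\in\mathbb T$ (away from the exceptional points). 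A short affine-geometry argument — two chords $t\mapsto 2\re(e^{it}a)\pm b$ of an ellipse projecting to the same curve on the sphere, with the ellipse centered so that the origin sees them the same way — then forces $b$ to be parallel to the direction which, together with the ellipse, makes $2\re(\lambda a)+b$ and $2\re(\lambda a)-b$ positively proportional; using that $\re f(0)=0$ lies in $\Omega$ (so $0$ is genuinely interior and the projected image is not a point) this pins down $b=0$, or else shows $f$ coincides with the geodesic having parameters $(a,0)$.

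The main obstacle I anticipate is the last step: extracting $b=0$ from the equality of the two projected curves. The equality $\pi(2\re(\lambda a)+b) = \pi(2\re(\lambda a)-b)$ for all $\lambda$ does not instantly give $b=0$; it says $2\re(\lambda a)+b$ and $2\re(\lambda a)-b$ are positive multiples of each other pointwise, i.e. $2\re(\lambda a)$ and $b$ are positively proportional for every $\lambda$, which (since $\lambda\mapsto\re(\lambda a)$ traces a full ellipse, not contained in a ray) is impossible unless $b=0$. Making this rigorous while handling the two excluded points of $\mathbb T$ and the degenerate (segment) case of the ellipse — where $2\re(\lambda a)$ itself is a segment through $0$ and one must argue slightly differently — is the delicate part; everything else is a direct consequence of oddness of $\Phi$ and uniqueness of geodesics.
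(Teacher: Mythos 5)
Your proof is correct and follows essentially the same route as the paper: oddness of the Gauss map, uniqueness of the complex geodesic through $0$ in the direction $f^{\prime}(0)$ to obtain $f(\lambda)=-f(-\lambda)$, and then the forced collinearity of $\re a$, $\im a$, $b$ leading to a contradiction unless $b=0$. The only caveat is that the uniqueness you invoke is not part of the general Lempert theorem for bounded convex domains (it fails, e.g., for the bidisc); it holds here because $\Omega$ is strongly convex, which is exactly the point for which the paper cites the proof of Proposition 10 in \cite{Pfl-Zwo 2018}.
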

\begin{proof}
The symmetry of $\Omega$ implies that the diffeomorphism $\Phi$ satisfies the property $\Phi(-x)=-\Phi(x)$, $x\in\partial\Omega$. Let $f:\mathbb D\to T_{\Omega}$ be a complex geodesic with $f(0)=0$. The uniqueness of complex geodesics passing through $0$ in the fixed direction $f^{\prime}(0)$ (compare the proof of Proposition 10 in \cite{Pfl-Zwo 2018}) implies that $f(-\lambda)=-f(\lambda)$, $\lambda\in\mathbb D$. Let $a,b$ be associated to $f$. Our aim is to show that $b=0$. The symmetry of $f$ implies that 
\begin{equation}\label{eq:antisymmetry}
\frac{-2\re(\lambda a)+b}{||-2\re (\lambda a)+b||}=-\frac{2\re(\lambda a)+b}{||2\re (\lambda a)+b||}, \;\lambda\in\mathbb T.
\end{equation}
That means that for any $|\lambda|=1$ the vectors $-2\re (\lambda a)+b$, $-2\re(\lambda a)-b$ are co-linear. Therefore, $\re a$, $\im a$, $b$ are co-linear. In the case $b\neq 0$ this easily leads to a contradiction with the the property (\ref{eq:antisymmetry}).
 \end{proof}

\begin{remark}
The above result implies that for $\Omega$ symmetric the real parts of complex geodesics passing through $0$ are given at boundary point $\lambda$ by the formula $\Phi^{-1}\left(\frac{\re(a\lambda)}{||\re(a\lambda)||}\right)$. Note that in the case $\Omega=\mathbb B_n$ the (real parts of) complex geodesics passing through the origin lie in the two-dimensional linear subspace. Thus the problem of description of complex geodesics in $T_{\mathbb B_n}$ passing though the origin reduces to the case $n=2$. 
\end{remark}

\subsection{General situation once more} Let us come back to the general situation and let us recall the description of complex geodesics in the tube domains that is presented in \cite{Pfl-Zwo 2018} (Section 4.3).

The situations which we list below reflect the fact that the image of the mapping $\tilde F:\mathbb T\owns\to2\re(a\lambda)+b$ has two possibilities and more subcases. The image of the mapping is either an ellipse (co-planar with the origin or not), which is the case when the vectors $\re a$ and $\im a$ are $\mathbb R$--linearly independent, or a closed line segment (with the origin lying in the segment or not).

\begin{remark}\label{remark:geodesics-different-types}
Note that the case when $\tilde F$ has the image being the line segment, i. e. $\re a$ and $\im a$ are linearly dependent, may split into two subcases. With $b$ being colinear with $\re a$ and $\im a$ and the case when $b, \re a$ or $b,\im a$ are linearly independent. In the first case $0$ must lie in the interior of the segment being the image of $\tilde F$ (otherwise $f$ would be constant).

Below we present all the possibilities we have to study that follow directly from the geometry of the image of $F$ and are in principle listed in Section 4.3 in \cite{Pfl-Zwo 2018} (modulo some obvious corrections):
\begin{itemize}
\item $F$ is an embedding of the circle into a sphere.\\
\item $F$ is a real analytic mapping, the image is a (closed) arc of a great circle that is folded up twice by $F$; more precisely, the mapping $[t_0,t_0+2\pi]\owns t\to F(e^{it})=\gamma(e^{i\rho(t)}$), where $t_0<t_1<t_0+2\pi$ and $\rho:[t_0,t_0+2\pi]\to\mathbb R$ is such that $\rho_{[t_0,t_1]}$ is increasing, $\rho_{[t_1,t_0+2\pi]}$ is decreasing $\rho(t_0)=\rho(t_0+2\pi)$, $\rho(t_1)<\rho(t_0)+2\pi$ and $\gamma:[\rho(t_0),\rho(t_1)]\to\mathbb S^{n-1}$ is an arc of a great circle.\\
\item $F:\mathbb T\setminus\{\lambda_0\}\to\mathbb S^{n-1}$ is a real analytic diffeomorphism onto the image being the big open semicircle such that
\begin{equation}
\lim_{t\to t_0^+}F(e^{it})=-\lim_{t\to t_0^-}F(e^{it}),\; \lambda_0=e^{it_0}.
\end{equation}
\\
\item $F:\mathbb T\setminus\{\lambda_0,\lambda_1\}\to\mathbb S^{n-1}$ is constant on the two connected components (arcs) of $\mathbb T\setminus\{\lambda_0,\lambda_1\}$ and the two values are opposite.
\end{itemize}
By the appropriate choice of $a,b$ all the possibilities listed above do occur.
\end{remark}

To have a better insight into the geometry of complex geodesics in tube domains we would require the notion of {\it antipodal points $x,y\in\partial \Omega$}, which are the points such that the tangent hyperplanes are parallel, or equivalently, the normal vectors satisfy the equality $\nu_{\Omega}(x)=-\nu_{\Omega}(y)$. The antipodal points are the ones that are the images of antipodal points in the sphere by $\Phi^{-1}$ and as we have already seen (and we shall see in the sequel) they play a special role when considering the complex geodesics of one of the last two subcases listed above. Note also that in the case $n=2$ the real line $l(x,y)$ joining antipodal points $x$ and $y$ divides the domain $\Omega$ into two parts. 

We know that the real parts of complex geodesics play a special role in the theory of minimal surfaces as noted in \cite{For-Kal 2021}. The harmonic mappings may have in some cases a simple form and their image will be a segment which represents the last subcase. Below we present its effective formula.

\begin{remark}\label{remark:antipodal-geodesic} Combining the formula (\ref{eq:poisson-formula}) applied to the last case in Remark~\ref{remark:geodesics-different-types} we get that two antipodal points $x,y\in\partial\Omega$ determine (up to a composition with the M\"obius function) the complex geodesic 
\begin{equation}\label{eq:one-dimensional-geodesic}
f(\lambda):=\frac{x+y}{2}+\frac{y-x}{2}\frac{2i}{\pi}\log\frac{1+\lambda}{1-\lambda},\;\lambda\in\mathbb D.
\end{equation}
The above geodesics have the simplest possible geometry - one-dimensional in the sense that their real parts are harmonic mappings onto segments. 

The above fact may also be seen directly as follows. For two points $x,y\in\partial\Omega$ we may construct an analytic disc defined on the strip $H:=\{\lambda\in\mathbb C:-1<\re \lambda<1\}$ as follows
\begin{equation}
\psi:H\owns\lambda\to\frac{x+y}{2}+\frac{y-x}{2}\lambda\in T_{\Omega}.
\end{equation}
Without the loss of generality (transforming $\Omega$ by affine automorphism that induces a holomorphic affine automorphism of $T_{\Omega}$) we may assume that $y=-x=(-1,0,\ldots,0)$ and $\Omega\subset(-1,1)\times\mathbb R^{n-1}$ (here we also use the fact that $x,y$ are antipodal). And then the projection $\operatorname{pr}$ on the first variable in $\mathbb C^n$ gives that $\operatorname{pr}\circ\psi$ is the identity on $H$, which implies that $\psi$ is a complex geodesic modulo a conformal mapping between $\mathbb D$ and $H$ - calculating it we get exactly the formula for $f$ as above.

In the subsequent section we shall discuss the complex geodesics in tube domains over two-dimensional bases in all possible cases. The situation will be more complicated but still the structure could be well understood.

\end{remark}

\subsection{Complex geodesics in tube domains over two dimensional basis}\label{section:two-dimensional}
The case $n=2$ forces the real parts of complex geodesics to map the unit disc into planar domains. As remarked in the last subsection the case the image is one dimensional is well understood. All properties of the next proposition except for the last one follow directly from Remark~\ref{remark:geodesics-different-types}, the fact that the map $\re f$ is a diffeomorphism in two first cases is a consequence of the versions of the Rad\'o-Choquet-Kneser Theorem as formulated in \cite{Dur 2004} Sections 3.1 and 3,2 (with the remark following the formulation of the strong form of the theorem); the last case is a direct consequence of Theorem~\ref{thm:harmonic-bivalent}.

\begin{proposition}\label{prop:geodesics-two-dimensional} Let $\Omega$ be a $C^k$-strongly convex domain in $\mathbb C^2$ and let $f:\mathbb D\to T_{\Omega}$ be a complex geodesic. Then $f$ is of one of the following forms.

\begin{itemize}

\item $\re f:\mathbb D\to \Omega$ is a harmonic mapping such that its extension on the boundary is a homeomorphism (of the special form); 
consequently $\re f$ is a harmonic diffeomorphism.\label{homeomorphism-onto}\\

\item $\re f:\mathbb D\to \Omega$ is such that its extension maps $\mathbb T\setminus\{e^{it_0}\}$ homeomorphically onto the boundary of $\Omega$ lying between two antipodal points $x,y\in\partial \Omega$ and $\lim_{t\to t_0^-}\re f(e^{it})=x$, $\lim_{t\to t_0^+}\re f(e^{it})=y$ . Consequently, $\re f$ is a diffeomorphism onto one part of $\Omega$ divided by the real line joining $x$ and $y$.\\

\item $\re f$ is a harmonic mapping onto the line $l(x,y)$ joining two antipodal points $x,y\in\partial\Omega$. The formula is as in (\ref{eq:one-dimensional-geodesic}).\\
  
\item $\re f$ is a harmonic mapping such that its extension maps $\partial\mathbb D$ onto part of the boundary of the domain lying above (or below) the line $l(x,y)$ joining antipodical points $x,y$. Moreover, $\re f$ maps bivalently $\mathbb D$ onto some proper subdomain of $\Omega$.
 \end{itemize}
\end{proposition}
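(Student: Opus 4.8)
The plan is to run through the four geometric possibilities for the curve $F=\pi\circ\tilde F\colon\mathbb T\to\mathbb S^{1}$ catalogued in Remark~\ref{remark:geodesics-different-types}, and to convert, in each case, the boundary data $\re f=\Phi^{-1}\circ F$ of the harmonic map $\re f\colon\mathbb D\to\Omega$ (which is the Poisson integral of those boundary values, by (\ref{eq:poisson-formula})) into a statement about its extension. Two facts will be used throughout: first, $\Phi^{-1}\colon\mathbb S^{1}\to\partial\Omega$ is a $C^{k-1}$-diffeomorphism which, by the very definition of antipodal points of $\partial\Omega$, maps antipodal points of the sphere to antipodal points of $\partial\Omega$; second, the strong convexity of $\Omega$ forbids $\partial\Omega$ from containing any line segment, and forces the line through two distinct boundary points to meet $\partial\Omega$ exactly at those points. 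The first fact links the semicircle/antipodal language of Remark~\ref{remark:geodesics-different-types} to antipodal points of $\partial\Omega$; the second guarantees that the arc produced in the last case is not a segment, so that Theorem~\ref{thm:harmonic-bivalent} is applicable there.

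Suppose first that $F$ is an embedding of $\mathbb T$ into $\mathbb S^{1}$. A continuous injection of $\mathbb T$ has compact connected image with more than one point; a proper such subset of $\mathbb S^{1}$ is an arc, which is not a continuous injective image of $\mathbb T$, so $F$ is a homeomorphism of $\mathbb T$ onto $\mathbb S^{1}$ and $\re f|_{\mathbb T}=\Phi^{-1}\circ F$ is a homeomorphism of $\mathbb T$ onto $\partial\Omega$; as $\Omega$ is convex, the classical Rad\'o--Kneser--Choquet Theorem makes $\re f$ a diffeomorphism of $\mathbb D$ onto $\Omega$, the first case. Suppose next that $F$ is a diffeomorphism of $\mathbb T\setminus\{\lambda_{0}\}$ onto an open great semicircle with $\lim_{t\to t_{0}^{+}}F=-\lim_{t\to t_{0}^{-}}F$, $\lambda_{0}=e^{it_{0}}$. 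Then $\re f$ carries $\mathbb T\setminus\{\lambda_{0}\}$ homeomorphically onto an open boundary arc $\Gamma^{\circ}$ of $\partial\Omega$, and its one-sided limits $x,y$ at $\lambda_{0}$, being $\Phi^{-1}$-images of the antipodal endpoints of the semicircle, are antipodal points of $\partial\Omega$; thus $\Gamma^{\circ}$ is one of the two open boundary arcs cut out by $x$ and $y$. Let $\Omega_{1}$ be the component of $\Omega\setminus l(x,y)$ whose closure contains $\Gamma^{\circ}$, a bounded convex domain; with $\ell$ an affine function vanishing on $l(x,y)$ and positive on the side of $\Omega_{1}$, the function $\ell\circ\re f$ is harmonic with boundary values $\geq 0$, so $\ell\circ\re f\geq 0$ in $\mathbb D$, and $>0$ there unless $\re f$ maps into $l(x,y)$, which is excluded in this case; together with $\re f(\mathbb D)\subset\Omega$ this gives $\re f(\mathbb D)\subset\Omega_{1}$. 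Since $\re f$ then sends $\mathbb T\setminus\{\lambda_{0}\}$ homeomorphically onto the boundary arc $\Gamma^{\circ}$ of the convex domain $\Omega_{1}$, the form of the Rad\'o--Kneser--Choquet Theorem covering such a boundary correspondence (cf. \cite{Dur 2004}, Sections~3.1 and~3.2, with the remark following the strong form) shows $\re f$ is a diffeomorphism of $\mathbb D$ onto $\Omega_{1}$, the second case.

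In the third case $F$ is constant, equal to $\pm v$ for a unit vector $v$, on the two arcs of $\mathbb T$ complementary to the two exceptional points, so $\re f|_{\mathbb T}$ takes only the antipodal values $x=\Phi^{-1}(v)$ and $y=\Phi^{-1}(-v)$; evaluating the Poisson integral of this two-valued boundary function, exactly as in Remark~\ref{remark:antipodal-geodesic}, yields the formula (\ref{eq:one-dimensional-geodesic}), and since $\frac{2i}{\pi}\Log\frac{1+\lambda}{1-\lambda}$ maps $\mathbb D$ onto the strip $\{w:-1<\re w<1\}$, the image of $\re f$ is the open segment $l(x,y)\cap\Omega$. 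In the remaining case $F$ is the twice-folded real-analytic map of Remark~\ref{remark:geodesics-different-types}: with $F(e^{it})=\gamma(e^{i\rho(t)})$, $\rho$ increasing on $[t_{0},t_{1}]$ and decreasing on $[t_{1},t_{0}+2\pi]$, set $\Gamma:=\Phi^{-1}(F(\mathbb T))$, a proper connected closed sub-arc of $\partial\Omega$ with endpoints $x=\Phi^{-1}(F(e^{it_{0}}))$, $y=\Phi^{-1}(F(e^{it_{1}}))$, and put $p=e^{it_{0}}$, $q=e^{it_{1}}$, $\gamma_{1}=\{e^{it}:t_{0}\le t\le t_{1}\}$, $\gamma_{2}=\{e^{it}:t_{1}\le t\le t_{0}+2\pi\}$. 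The monotonicity of $\rho$ on each piece makes $\re f$ restrict to a homeomorphism of each $\gamma_{j}$ onto $\Gamma$, and $\Gamma$ is not a segment by strong convexity, so all hypotheses of Theorem~\ref{thm:harmonic-bivalent} hold; quoting its conclusion gives $\re f(\mathbb D)=(\operatorname{int}\convexhull(\Gamma))\setminus D_{1}$ together with the whole structure asserted there. The two side assertions — that $\re f(\partial\mathbb D)=\Gamma$ lies on one side of the line $l(x,y)$ through its endpoints, and that $\re f(\mathbb D)$ is a proper subset of $\Omega$ — follow respectively from $\Gamma$ being an arc of the convex curve $\partial\Omega$ between $x$ and $y$, and from $\convexhull(\Gamma)\subsetneq\overline{\Omega}$ together with $\operatorname{int}\convexhull(\Gamma)\cap\partial\Omega=\emptyset$ (again strong convexity).

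The step that needs the most care is the second one. One must be sure not merely that $\re f$ maps into $\Omega$ but that it maps into the correct half $\Omega_{1}$, and, more seriously, one must invoke the precise variant of the Rad\'o--Kneser--Choquet Theorem that applies when the boundary correspondence is a homeomorphism only off a single point and ``jumps'' across the chord $l(x,y)$ rather than traversing it; locating the exact statement in \cite{Dur 2004}, or, failing that, re-running Kneser's argument on the circles $|\lambda|=1-\varepsilon$ — whose $\re f$-images tend to the Jordan curve $\overline{\Gamma^{\circ}}\cup\overline{l(x,y)\cap\Omega}$ — is the only genuinely delicate point, the other three cases being little more than bookkeeping on top of Remark~\ref{remark:geodesics-different-types}, Remark~\ref{remark:antipodal-geodesic}, and Theorem~\ref{thm:harmonic-bivalent}.
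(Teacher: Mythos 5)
Your argument is the same as the paper's, which handles this proposition in a single paragraph by citing Remark~\ref{remark:geodesics-different-types} for the case division, the Rad\'o--Kneser--Choquet Theorem (in the forms of \cite{Dur 2004}, Sections 3.1--3.2) for the first two cases, the computation of Remark~\ref{remark:antipodal-geodesic} for the segment case, and Theorem~\ref{thm:harmonic-bivalent} for the bivalent case; your write-up supplies the bookkeeping, and the added details (the affine-functional argument placing $\re f(\mathbb D)$ in the correct component of $\Omega\setminus l(x,y)$ in the semicircle case, the verification of the hypotheses of Theorem~\ref{thm:harmonic-bivalent}) are correct.

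One point to adjust in the last case: you take $x,y$ to be the endpoints of $\Gamma$ and show that $\Gamma$ lies on one side of their chord, but the proposition asserts that $x,y$ are \emph{antipodal}, and the endpoints of $\Gamma$ are never antipodal in this case (they are the $\Phi^{-1}$-images of the two extreme directions of the image of $\tilde F$ --- an ellipse or a doubly traced segment whose convex hull does not contain the origin --- and two such directions are opposite only when the origin lies between them). The antipodal pair the statement refers to arises differently: since $0$ lies outside the convex hull of the image of $\tilde F$, some line $\mathbb R w$ through the origin is disjoint from that image, so $F(\mathbb T)$ is contained in the open half-circle of $\mathbb S^1$ bounded by $\pm w/||w||$; consequently $\Gamma$ lies strictly inside one of the two boundary arcs of $\partial\Omega$ cut out by the antipodal points $\Phi^{-1}(w/||w||)$ and $\Phi^{-1}(-w/||w||)$, and hence on one side of the line joining them. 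With that substitution your proof matches the statement as written.
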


Actually, an even more precise description of how $\re f(\mathbb D)$ and $\re f$ look like is presented in Theorem~\ref{thm:harmonic-bivalent}.

Note that in the situation as in Proposition~\ref{prop:geodesics-two-dimensional} holomorphic geodesics may join two different points having the same real part only in the third and last case. In the case of $\Omega$ being additionally symmetric around zero the antipodal points are opposite so the last case makes it impossible have the origin in the image of the real part of the geodesics. This implies the following property.

A direct consequence of the above result together with the form of complex geodesics in tube domains over bases symmetric around zero is the following.
 
\begin{corollary}
Let $\Omega\subset\mathbb R^2$ be a $C^2$-strongly convex bounded domain symmetric around zero. Let $f$ be a complex geodesic in $T_{\Omega}$ joining $0$ and $0+iv$ for some $v\in\mathbb R^2$. Then $f$ is (up to a M\"obius mapping) of the form as in Remark~\ref{remark:antipodal-geodesic} (formula (\ref{eq:one-dimensional-geodesic})) with $y=-x$ for some $x\in\partial\Omega$.
\end{corollary}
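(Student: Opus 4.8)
The plan is to combine Proposition~\ref{prop:geodesics-two-dimensional} with the symmetry of $\Omega$ to eliminate all but the one‑dimensional case, and then to pin down the antipodal points using the endpoint conditions.

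First I would invoke Proposition~\ref{prop:geodesics-two-dimensional}: since $f$ is a complex geodesic in $T_{\Omega}$, it is of one of the four listed forms. I would then argue that $\re f(0)=\re(0)=0$ together with $\re f(0+iv)=0$ forces $f$ to join two distinct points of $T_{\Omega}$ with the same real part, namely $0$ and $0+iv$ (if $v=0$ there is nothing to prove, or the statement degenerates; one should note $v\neq 0$ or treat this trivial subcase separately). By the observation immediately preceding the corollary, a complex geodesic can join two different points with the same real part only in the third or the fourth case of Proposition~\ref{prop:geodesics-two-dimensional}. So it remains to rule out the fourth (bivalent) case.

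To rule out the bivalent case I would use the hypothesis $\Omega=-\Omega$. As recorded earlier in the excerpt (the remark after the symmetry proposition, and the paragraph after Proposition~\ref{prop:geodesics-two-dimensional}), for $\Omega$ symmetric around zero the antipodal points $x,y$ satisfy $y=-x$, so the line $l(x,y)$ is a line through the origin. In the fourth case $\re f(\mathbb D)$ lies strictly on one side of $l(x,y)$ (it is the part of $\Omega$ above, or below, that line, minus a convex piece), hence $0\notin\re f(\mathbb D)$. But $0=\re f(0)\in\re f(\mathbb D)$, a contradiction. Therefore $f$ must be of the third form, i.e. (up to a Möbius automorphism of $\mathbb D$) given by formula~(\ref{eq:one-dimensional-geodesic}) with some pair of antipodal points; by symmetry that pair is $\{x,-x\}$ for some $x\in\partial\Omega$, which is exactly the asserted form from Remark~\ref{remark:antipodal-geodesic}.

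The only genuinely substantive point — and hence the main obstacle — is the clean reduction ``geodesic joins two points with equal real part $\Rightarrow$ only cases three and four,'' which is asserted (without detailed proof) in the paragraph after Proposition~\ref{prop:geodesics-two-dimensional}; I would supply the short justification that in the first two cases $\re f$ is a diffeomorphism onto its image and hence injective, so $\re f(0)=\re f(\lambda)$ with $f(0)=0$, $f(\lambda)=0+iv$ would force $\lambda=0$ and thus $v=0$. Everything else is bookkeeping: identifying $\re f(0)$, checking $0\in\Omega$ so the geodesic through $0$ exists and the above description applies, and reading off from (\ref{eq:one-dimensional-geodesic}) that the free constant $i\,\im f(0)$ can be normalised so that the endpoints are $0$ and $0+iv$. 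A minor care point is the Möbius normalisation: (\ref{eq:one-dimensional-geodesic}) as written has $f(0)=\tfrac{x+y}{2}=0$ when $y=-x$, so in fact no Möbius adjustment beyond a rotation of the disc is needed to get $f(0)=0$, and the value $f$ takes at the second prescribed point determines $v$ up to the expected ambiguity.
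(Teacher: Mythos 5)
Your proposal is correct and follows essentially the same route as the paper: the corollary is deduced there from the observation, stated in the paragraph immediately preceding it, that a geodesic joining two points with equal real parts must fall into the third or fourth case of Proposition~\ref{prop:geodesics-two-dimensional}, and that for $\Omega=-\Omega$ the antipodal points are opposite, so in the fourth case the origin cannot lie in $\re f(\mathbb D)$. Your added justifications (injectivity of $\re f$ in the first two cases, and the normalisation $f(0)=\tfrac{x+y}{2}=0$ when $y=-x$) are exactly the bookkeeping the paper leaves implicit.
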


\section{Geodesics in the tube domain over the ball} In the case of the tube domain over the ball the geodesics passing through the $0$ and such that $f(0)=0$ are nicely described by the Poisson formula that in this case has the following form
\begin{equation}\label{eq:poisson}
f(\lambda)=\frac{1}{2\pi}\int_0^{2\pi}\frac{e^{it}+\lambda}{e^{it}-\lambda}\frac{\re(e^{it}a)}{||\re(e^{it}a)||}dt,
\end{equation}
where $a\in\mathbb C^n$. Consequently, we have
\begin{equation}\label{eq:derivative}
f^{\prime}(0)=\frac{1}{\pi}\int_0^{2\pi}e^{-it}\frac{\re(e^{it}a)dt}{||\re(e^{it}a)||},
\end{equation}
which gives the following
\begin{equation}
f^{\prime}(0)=\frac{1}{\pi}\int_0^{2\pi}e^{-it}\frac{\cos t\re a-\sin t\im a}{||\cos t\re a-\sin t\im a||}dt.
\end{equation}
Then after elementary transformations we get
\begin{multline}
f^{\prime}(0)=\\
\frac{1}{\pi}\int_0^{2\pi}\frac{\cos^2 t\re a-\cos t\sin t\im a+i(\sin^2t\im a-\sin t\cos t\re a)}{||\cos t\re a-\sin t\im a||}dt=\\
\frac{1}{\pi}\int_0^{2\pi}\frac{\cos^2 t\re a-\cos t\sin t\im a+i(\sin^2t\im a-\sin t\cos t\re a)}{\sqrt{\cos^2t||\re a||^2+\sin^2 t||\im a||^2-2\langle \re a,\im a\rangle\cos t\sin t}}dt.
\end{multline}

%In other words in the case the boundary mapping is a two-to-one mapping onto some portion of the bundary of a convex set then the harmonic extension is %also two-to-one mapping onto some (non-convex) domain. The only singular points are the ones lying on some analytic curve that is mapped onto the boundary %of the image.

\begin{remark}\label{remark:orthogonality} Note that we may make some simplification in the study of complex geodesics in the tube domain $T_{\mathbb B_n}$ passing through $0$ to the ones with the tangent vector $X+iY\subset\mathbb R^n+i\mathbb R^n$ such that $\langle X,Y\rangle=0$ ($\langle\cdot,\cdot\rangle$ denotes the standard scalar product in $\mathbb R^n$). In fact this would follow from the following fact. The complex geodesic for $(0;X+iY)$ also produces a complex geodesic for $(0;\omega (X+iY))$ for any $|\omega|=1$. In other words the reduction will be possible if for some $\omega=e^{i\theta}$ we shall get that real and imaginary parts of $\omega(X+iY)$ are orthogonal. But
\begin{multline}
\omega(X+iY)=
(\cos \theta X-\sin \theta Y)+i(\cos \theta Y+\sin \theta X)=:\widetilde{X}+i\widetilde{Y}.
\end{multline}
%\begin{multline}
%\omega(X+iY)=
%((\cos t X_1-\sin t Y_1)+i(\cos t Y_1+\sin t X_1),\\(\cos tX_2-\sin t Y_2)+i(\cos t Y_2+\sin t X_2))=:\widetilde{X}+i\widetilde{Y}.
%\end{multline}
We need to find $\theta\in\mathbb R$ such that $\langle \widetilde{X},\widetilde{Y}\rangle=0$ which gives the condition
\begin{equation}\label{eq:orthogonality}
\cos(2\theta)\langle X,Y\rangle+\frac{1}{2}\sin(2\theta)(||X||^2-||Y||^2)=0
\end{equation}
that will always be assumed for some $\theta\in\mathbb R$.

\end{remark}

\begin{remark} Let us make one more observation, that is applicable for all convex tube domains $\Omega\subset\mathbb R^n$ containg the origin, namely we have the following property
\begin{equation}
\kappa_{T_{\Omega}}(0;X+iY)=\kappa_{T_{\Omega}}(0;X-iY),\; X+iY\in\mathbb C^n,
\end{equation}
that easily follows from the fact that the competitor in the definition of $\kappa_{T_{\Omega}}(0;\cdot)$ the holomorphic mapping $f:\mathbb D\to T_{\Omega}$ with $f(0)=0$ gives another function $g:\mathbb D\to T_{\mathbb B_n}$ by the formula $g(\lambda):=\overline{f(\overline{\lambda})}$, $\lambda\in\mathbb D$ that satisfies $g(0)=0$, $g^{\prime}(0)=\overline{f^{\prime}(0)}$. 

We even have a stronger regularity under the additional assumption of $\Omega$ being symmetric around $0$: $\kappa_{T_{\Omega}}(0;\pm X\pm iY)=\kappa_{T_{\Omega}}(0;X+iY)=\kappa_{T_{\Omega}}(0;\pm Y\pm iX))$, $X,Y\in\mathbb R^n$.
\end{remark}

Below we assume that $n=2$, which makes no restriction on generality for calculating $\kappa_{T_{\mathbb B_n}}(0;\cdot)$.

\begin{remark} The equation (\ref{eq:orthogonality}) together with the previous symmetries allows us to give a fully effective formula that reduces the problem of calculating the Kobayashi-Royden metric to the special case of a vector $X+iY$, where $X\in[0,\infty)\times\{0\}$ and $Y\in \{0\}\times [0,\infty)$.  
Actually, we easily get the following 
\begin{multline}
||\tilde X||^2=||\cos \theta X-\sin \theta Y||^2=\\
\frac{1}{2}\cos (2\theta)(||X||^2-||Y||^2)+\frac{1}{2}(||X||^2+||Y||^2)-\sin(2\theta)\langle X,Y\rangle.
\end{multline}
The last formula transforms for $\theta$ satisfying (\ref{eq:orthogonality}) to 
\begin{equation}\label{eq:alpha}
\pm\frac{1}{2}\sqrt{4\langle X,Y\rangle^2+(||X||^2-||Y||^2)^2}+\frac{1}{2}(||X||^2+||Y||^2)=:(\alpha(X,Y))^2.
\end{equation}
Analogously we get the equality
\begin{multline}\label{eq:beta}
||\tilde Y||^2=||\cos \theta Y+\sin \theta Y||^2=\\
\mp\frac{1}{2}\sqrt{4\langle X,Y\rangle^2+(||X||^2-||Y||^2)^2}+\frac{1}{2}(||X||^2+||Y||^2)=:(\beta(X,Y))^2.
\end{multline}
Consequently, making additionally use of the formula $\kappa_{T_{\mathbb B_2}}(0;X+iY)=\kappa_{T_{\mathbb B_2}}(0;UX+iUY)$ for arbitrary unitary matrix $U$ we get the following formula that allows us the desired reduction
\begin{equation}
\kappa_{T_{\mathbb B_2}}(0;X+iY)=\kappa_{T_{\mathbb B_2}}(0;(\alpha(X,Y),i\beta(X,Y))).
\end{equation}
\end{remark}

%Elements $a\in\mathbb C^2\setminus\{0\}$ by the formula (\ref{eq:poisson}) provide complex geodesics $f$ with its derivative
%$f^{\prime}(0)\in [0,\infty)\re a+i[0,\infty)\im a$.  

Denote the function defined by (\ref{eq:poisson}) and $a=(\cos\theta,i\sin\theta)$ for some $\theta\in\mathbb R$ by $f_{\theta}$.
Then we get
\begin{multline}
f_{\theta}^{\prime}(0)=
\frac{1}{\pi}\int_0^{2\pi}\frac{\cos\theta(\cos^2t-i\sin t\cos t),\sin\theta(i\sin^2t-\sin t\cos t))dt}{\sqrt{\cos^2\theta \cos^2 t+\sin^2\theta \sin^2 t}}.
\end{multline}
It is elementary to see that the integral $\int_0^{2\pi}\frac{\sin t\cos tdt}{\sqrt{\cos^2\theta\cos^2 t+\sin^2\theta\sin^2 t}}$ vanishes which lets us write
\begin{multline}
f_{\theta}^{\prime}(0)=
\frac{1}{\pi}\int_0^{2\pi}\frac{(\cos\theta\cos^2 t,i\sin\theta\sin^2t)dt}{\sqrt{\cos^2\theta\cos^2 t+\sin^2\theta \sin^2 t}}=\\
\frac{4}{\pi}\int_0^{\frac{\pi}{2}}\frac{(\cos\theta\cos^2 t,i\sin\theta\sin^2t)dt}{\sqrt{\cos^2\theta\cos^2 t+\sin^2\theta \sin^2 t}}.
\end{multline}
After taking into considerations possible reductions, making additionally use of unitary transformation $U$ applied simultanously to both real and imaginary part of elements we see that we may reconstruct all the complex geodesics in $T_{\mathbb B_b}$ passing through the origin by considering the geodesics $f_{\theta}$.

Note that $f_{0}^{\prime}(0)=(4/\pi,0)$, $f_{\pi/2}^{\prime}(0)=(0,i\frac{4}{\pi})$, $f_{\pi/4}^{\prime}(0)=(1,i)$. The symmetry properties of $\kappa_{T_{\mathbb B_2}}(0;\cdot)$ allow us to reduce the problem of caluculating $f_{\theta}^{\prime}(0)$ to the case $\theta\in [0,\pi/2)$. The property $f_{\pi/2-\theta}^{\prime}(0)=(\frac{1}{i}(f_{\theta})_2^{\prime}(0),i(f_{\theta})_1^{\prime}(0))$ lets us reduce the problem to $\theta\in[0,\pi/4]$. We also see that all the directions from the indicatrix $I_{T_{\mathbb B_2}}(0)$ that are lying in $\mathbb R\times i\mathbb R$ are represented by the derivative $f_{\theta}^{\prime}(0)$ for some $\theta$. 

It is elementary to find out that we get the following equalities
\begin{multline}
f_{\theta}^{\prime}(0)=\\
\frac{4}{\pi}\left(\frac{E(\sqrt{1-\tan^2\theta})-\tan^2\theta K(\sqrt{1-\tan^2\theta})}{1-\tan^2\theta},i\tan\theta\left(\frac{K(\sqrt{1-\tan^2\theta})-E(\sqrt{1-\tan^2\theta})}{1-\tan^2\theta}\right)\right),
\end{multline}
where $K$, $E$ are {\it complete elliptic integrals of the first and second type}:
\begin{equation}
K(k):=\int_0^1\frac{\sqrt{1-k^2x^2}dx}{\sqrt{1-x^2}},\; E(k):=\int_0^1\frac{dx}{\sqrt{1-x^2}\sqrt{1-k^2 x^2}}.
\end{equation}

Recall the identities
\begin{equation}
K^{\prime}(k)=\frac{E(k)-(1-k^2)K(k)}{k(1-k^2)},\; K^{\prime}(k)=\frac{E(k)-K(k)}{k}.
\end{equation}
Consequently, we may express the formula as follows:
\begin{equation}\label{eq:f-theta}
f_{\theta}^{\prime}(0)=\frac{2\tan\theta}{\pi\sqrt{1-\tan^2\theta}}\left(K^{\prime}(\sqrt{1-\tan^2\theta})\tan\theta,-iE^{\prime}(\sqrt{1-\tan^2\theta})\right).
\end{equation}
Define now
\begin{align}
\alpha(X,Y)&:=\sqrt{\frac{1}{2}\sqrt{4\langle X,Y\rangle^2+(||X||^2-||Y||^2)^2}+\frac{1}{2}(||X||^2+||Y||^2)},\label{align:alpha}\\
\beta(X,Y)&:=\sqrt{-\frac{1}{2}\sqrt{4\langle X,Y\rangle^2+(||X||^2-||Y||^2)^2}+\frac{1}{2}(||X||^2+||Y||^2)}.\label{align:beta}
\end{align}

In fact the values of $f_{\theta}^{\prime}(0)$ parametrize the bundary of the indicatrix $I_{T_{\mathbb B_2}}(0)$ intersected with $\mathbb R\times i\mathbb R$ by the formula
\begin{equation}
\Psi:\mathbb S^1\owns(\cos\theta,\sin\theta)\to f_{\theta}^{\prime}(0).
\end{equation}

In the further considerations we would be interested in the cases $\theta\in[0,\pi/4]$ so to make the expression well defined we recall that in the limit cases we have $f_0^{\prime}(0)=(4/\pi,0)$, $f_{\pi/4}^{\prime}(0)=(1,i)$.

Denote by 
$\mathfrak K$ the set of all $(x,y)\in\mathbb R^2$ such that $(x,iy)$ belongs to the convex set $I_{T_{\mathbb B^2}}(0)\cap (\mathbb R\times i\mathbb R)$ or equivalently the one bounded by the graph of the curve $\Psi$.

% \text{belongs 
%Since $\kappa_{T_{\mathbb B_n}}(0;X+iY)=\kappa_{T_{\mathbb B_2}}(0;(\alpha(X,Y),i\beta(X,Y))$ we actually get the parametrization of the whole indicatrix $$I_{T_{\mathbb B_n}}(0)$. This lets us also conclude that we may find the formula for $\kappa_{T_{\mathbb B_n}}(0;\cdot)$ in which the solution of an %equation involving the elliptic integrals is required.

 Then the analysis done above lets us formulate the following formula.

\begin{theorem}\label{thm:formula-kobayashi-ball} Let $X+iY\in\mathbb R^n+i\mathbb R^n\subset\mathbb C^n$ be a non-zero vector. Then the equation
\begin{equation}
rf_{\theta}^{\prime}(0)=(\alpha(X,Y),i\beta(X,Y))
\end{equation}
where the notation is as in (\ref{eq:f-theta}), (\ref{align:alpha}), (\ref{align:beta})
has exactly one solution $(r,\theta)\in(0,\infty)\times [0,\pi/4]$ and we get the equality $\kappa_{\mathbb B_n}(0;X+iY)=r$.
\end{theorem}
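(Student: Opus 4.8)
The plan is to exploit the symmetry reductions already established together with the fact that $\Psi$ parametrizes (an arc of) the boundary of the convex body $I_{T_{\mathbb B_2}}(0)\cap(\RR\times i\RR)$. First I would recall that by the sequence of reductions in the previous remarks---the rotation by $\omega=e^{i\theta_0}$ solving \eqref{eq:orthogonality}, the passage to $(\alpha(X,Y),i\beta(X,Y))$, and the invariance $\kappa_{T_{\mathbb B_2}}(0;X+iY)=\kappa_{T_{\mathbb B_2}}(0;UX+iUY)$ under real unitary $U$---it suffices to compute $\kappa_{T_{\mathbb B_2}}(0;(\alpha,i\beta))$ with $\alpha\geq\beta\geq 0$, and that by construction $\alpha\geq\beta$ forces the relevant point to correspond to parameter $\theta\in[0,\pi/4]$. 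So the whole statement reduces to: for every $(\alpha,i\beta)$ with $\alpha\geq\beta\geq 0$ not both zero there is a unique pair $(r,\theta)\in(0,\infty)\times[0,\pi/4]$ with $rf_\theta'(0)=(\alpha,i\beta)$, and then $\kappa=r$.

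For the \emph{existence and uniqueness} of $(r,\theta)$, I would argue geometrically. Since $T_{\mathbb B_2}$ is a convex tube domain over a bounded convex base, Lempert's theorem applies: complex geodesics through $0$ exist in every direction, hence $I_{T_{\mathbb B_2}}(0)$ is a compact convex body and $\kappa_{T_{\mathbb B_2}}(0;\cdot)$ is a genuine norm (on the relevant real subspace the base is bounded, so the indicatrix is bounded). The slice $\mathfrak K$ is therefore a compact convex subset of $\RR^2$ with nonempty interior (it contains $(4/\pi)\cdot$ the first axis direction etc.), and its boundary is a Jordan curve. The map $\Psi(\cos\theta,\sin\theta)=f_\theta'(0)$ is continuous and, as established above, its image is exactly this boundary curve; the map $f_\theta$ is an extremal disc, so $f_\theta'(0)\in\partial\mathfrak K$ (not merely in $\mathfrak K$) precisely because $f_\theta$ is a complex geodesic, i.e.\ $\kappa_{T_{\mathbb B_2}}(0;f_\theta'(0))=1$. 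Restricting to $\theta\in[0,\pi/4]$ and to the closed first ``quadrant'' $\{\alpha\geq\beta\geq 0\}$, the ray from the origin through $(\alpha,i\beta)$ meets $\partial\mathfrak K$ in exactly one point by convexity, and that point is $f_\theta'(0)$ for a unique $\theta$; then $r$ is the (unique, positive) scalar with $rf_\theta'(0)=(\alpha,i\beta)$. The only thing to check carefully is that the sub-arc $\theta\in[0,\pi/4]$ of $\partial\mathfrak K$ is exactly the sub-arc subtended by the cone $\{\alpha\geq\beta\geq0\}$: this follows from the endpoint values $f_0'(0)=(4/\pi,0)$ on the first axis, $f_{\pi/4}'(0)=(1,i)$ on the diagonal, together with the monotonicity of the argument of $f_\theta'(0)$ in $\theta$, which one reads off from the explicit formula \eqref{eq:f-theta} (both components are monotone in $\theta$ on this range), so $\Psi$ is injective there and sweeps the arc between those two endpoints monotonically.

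For the identity $\kappa_{T_{\mathbb B_2}}(0;X+iY)=r$: once $rf_\theta'(0)=(\alpha,i\beta)$, homogeneity of the Kobayashi--Royden metric gives $\kappa_{T_{\mathbb B_2}}(0;(\alpha,i\beta))=r\,\kappa_{T_{\mathbb B_2}}(0;f_\theta'(0))=r\cdot 1=r$, using that $f_\theta$ is a complex geodesic (so $\kappa_{T_{\mathbb B_2}}(0;f_\theta'(0))=1$), and then the chain of symmetry identities recalled above gives $\kappa_{T_{\mathbb B_2}}(0;X+iY)=\kappa_{T_{\mathbb B_2}}(0;(\alpha,i\beta))=r$; finally $\kappa_{T_{\mathbb B_n}}(0;X+iY)=\kappa_{T_{\mathbb B_2}}(0;X+iY)$ by the reduction to the two-dimensional linear subspace $V$ containing the geodesic. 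I expect the main obstacle to be the monotonicity/injectivity bookkeeping in the second paragraph---showing that $\theta\mapsto f_\theta'(0)$ traces $\partial\mathfrak K$ once, matching the range $\theta\in[0,\pi/4]$ exactly to the cone $\alpha\geq\beta\geq0$---since this is where one must combine the convexity of $I_{T_{\mathbb B_2}}(0)$ (from Lempert) with the explicit elliptic-integral formula rather than relying on soft arguments alone; the existence part and the homogeneity computation are routine.
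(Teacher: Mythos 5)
Your proposal is correct and follows essentially the same route the paper intends: the paper offers no written proof beyond ``the analysis done above,'' i.e.\ precisely the symmetry reductions to $(\alpha,i\beta)$ with $\alpha\ge\beta\ge 0$, the fact that each $f_\theta$ is a complex geodesic so that $f_\theta^{\prime}(0)$ lies on the boundary of the convex indicatrix, and homogeneity of $\kappa$. The monotonicity you flag as the main remaining obstacle is in fact immediate from the integral representation
\begin{equation*}
f_{\theta}^{\prime}(0)=\frac{4}{\pi}\int_0^{\pi/2}\frac{(\cos\theta\cos^2 t,\; i\sin\theta\sin^2 t)}{\sqrt{\cos^2\theta\cos^2 t+\sin^2\theta\sin^2 t}}\,dt ,
\end{equation*}
whose first component has integrand $\cos^2 t\,(\cos^2 t+\tan^2\theta\sin^2 t)^{-1/2}$, strictly decreasing in $\theta$, and whose second has integrand $\sin^2 t\,(\cot^2\theta\cos^2 t+\sin^2 t)^{-1/2}$, strictly increasing in $\theta$, so the argument of $f_\theta^{\prime}(0)$ increases strictly from $0$ to $\pi/4$ on $[0,\pi/4]$ (alternatively, injectivity of $\theta\mapsto f_\theta^{\prime}(0)$ follows from the uniqueness of complex geodesics through $0$ in a fixed direction, quoted in the paper from \cite{Pfl-Zwo 2018}).
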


\subsection{Schwarz lemma for harmonic mappings with Wirtinger derivatives}
The authors in \cite{For-Kal 2021} asked about the effective formula for the Kobayashi-Royden metric in the tube domain $T_{\mathbb B_2}$ at the origin. Theorem~\ref{thm:formula-kobayashi-ball} gives the answer to that problem in the probably best possible way as it involves the inverses of functions defined by complete elliptic integrals. Forstneric and Kalaj additionally remarked that the formula for the Kobayashi-Royden metric in the domain $T_{\mathbb B_2}$ solves a problem of determining the existence of harmonic self-mappings between the discs in $\mathbb R^2$ with the given derivatives that are problems that were considered in \cite{Bre-Ort-Seip 2021} and \cite{Kov-Yang 2020}. Note also that the formula in Theorem~\ref{thm:formula-kobayashi-ball} may let us produce necessary and sufficient algebraic conditions by a clever inscription of the convex set into $\mathfrak K$ (or the inscription of $\mathfrak K$ into a convex set) and thus may produce estimates similar to the ones given in 
\cite{Bre-Ort-Seip 2021} and \cite{Kov-Yang 2020}. We shall see later how this can be applied in simpler situations. However, it turns out that to provide some Schwarz type lemma for harmonic mappings that could be obtained from Theorem~\ref{thm:formula-kobayashi-ball} it is sometimes more handy to deal with the Wirtinger derivatives.
% a clever estimate of the perimeter of a suitable ellipse. Below we present how to proceed with getting the results as the latter paper deal with the Wirtinger %derivatives we recall them. We also see how nicely the formulas transform when expressed by the Wirtinger derviative.
%get one of such estimates that is mentoned in Remark 4.2 in \cite{Kov-Yang 2020}.

{\it The Wirtinger operators} are defined as follows
\begin{equation}
\partial:=\frac{1}{2}\left(\frac{\partial}{\partial x}+i\frac{\partial}{\partial y}\right),\;\overline{\partial}:=\frac{1}{2}\left(\frac{\partial}{\partial x}-i\frac{\partial}{\partial y}\right),
\end{equation}
which implies that the {\it the Wirtinger derivatives} for $C^1$-mapping $u=(u_1,u_2):\mathbb D\to\mathbb D$ are given by the following formulae
\begin{equation}
\overline{\partial} u=\frac{1}{2}\left(u_{1x}+u_{2y}+i(u_{2x}-u_{1y})\right),\;\partial u=\frac{1}{2}\left(u_{1x}-u_{2y}+i(u_{2x}+u_{1y})\right).
\end{equation}
It turns out that the functions $\alpha,\beta$ from the previous section when applied to $f^{\prime}(0)$ where $\re f=u$ may be nicely formulated with the help of the Wirtinger derivatives. After elementary calculations we get that when $u:\mathbb D\to\mathbb D$ is harmonic and $f:\mathbb D\to T_{\mathbb B_2}$ is such that $\re f=u$ we get that $f^{\prime}(0)=u_{x}-iu_{y}$. Consequently for $X+iY=f^{\prime}(0)=u_x-iu_y$ after some elementary calculations we have
\begin{equation}
\pm\frac{1}{2}\sqrt{4\langle X,Y\rangle^2+(||X||^2-||Y||^2)^2}+\frac{1}{2}(||X||^2+||Y||^2)=(|\partial u|\pm |\overline{\partial} u|)^2.
\end{equation}
Consequenly, we have the following version of the Schwarz Lemma for harmonic mappings.

\begin{theorem} There is a harmonic mapping $u:\mathbb D\to\mathbb D$ with $u(0)=0$, $a=\partial u(0)$, $b=\overline{\partial}u(0)$ iff $(|a|+|b|,|a|-|b|)\in \mathfrak K$.
\end{theorem}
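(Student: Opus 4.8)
The plan is to reduce this harmonic Schwarz-type statement directly to Theorem~\ref{thm:formula-kobayashi-ball} via the bridge between harmonic mappings $u:\mathbb D\to\mathbb D\subset\mathbb R^2$ and holomorphic mappings $f:\mathbb D\to T_{\mathbb B_2}$ with $\re f=u$. The key identities are those already recorded just above the statement: for such an $f$ one has $f'(0)=u_x-iu_y$, and writing $X+iY:=f'(0)$ one computes
\begin{equation}
\pm\tfrac12\sqrt{4\langle X,Y\rangle^2+(\|X\|^2-\|Y\|^2)^2}+\tfrac12(\|X\|^2+\|Y\|^2)=(|\partial u|\pm|\overline\partial u|)^2,
\end{equation}
so that $\alpha(X,Y)=|\partial u(0)|+|\overline\partial u(0)|=|a|+|b|$ and $\beta(X,Y)=\big||\partial u(0)|-|\overline\partial u(0)|\big|=|a|-|b|$ (note $|a|\ge|b|$ is automatic here, since $\beta\ge0$ by its defining square root, or can be arranged by orientation). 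Thus the pair $(|a|+|b|,|a|-|b|)$ is precisely the pair $(\alpha(X,Y),\beta(X,Y))$ attached to the vector $X+iY$.

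Next I would spell out the two implications. For the forward direction, suppose $u:\mathbb D\to\mathbb D$ is harmonic with $u(0)=0$. Let $f:\mathbb D\to T_{\mathbb B_2}$ be the holomorphic lift with $\re f=u$ and $\im f(0)=0$; then $f(0)=0\in T_{\mathbb B_2}$ and $f'(0)=u_x(0)-iu_y(0)=:X+iY$. By the very definition \eqref{eq:definition-kobayashi-royden} of the Kobayashi--Royden metric, $\kappa_{T_{\mathbb B_2}}(0;X+iY)\le1$, i.e.\ $X+iY\in I_{T_{\mathbb B_2}}(0)$. By the reduction formula $\kappa_{T_{\mathbb B_2}}(0;X+iY)=\kappa_{T_{\mathbb B_2}}(0;(\alpha(X,Y),i\beta(X,Y)))$ and Theorem~\ref{thm:formula-kobayashi-ball}, the vector $(\alpha(X,Y),i\beta(X,Y))=(|a|+|b|,i(|a|-|b|))$ lies in $I_{T_{\mathbb B_2}}(0)\cap(\mathbb R\times i\mathbb R)$, which by definition of $\mathfrak K$ says exactly $(|a|+|b|,|a|-|b|)\in\mathfrak K$. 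For the converse, assume $(|a|+|b|,|a|-|b|)\in\mathfrak K$, so $(|a|+|b|,i(|a|-|b|))\in I_{T_{\mathbb B_2}}(0)$ and hence $\kappa_{T_{\mathbb B_2}}(0;(|a|+|b|,i(|a|-|b|)))\le1$. Since $T_{\mathbb B_2}$ is a tube domain over a bounded convex base, the infimum in \eqref{eq:definition-kobayashi-royden} is attained (Lempert/Zaj\k ac, as recalled in the introduction), so there is a holomorphic $f:\mathbb D\to T_{\mathbb B_2}$ with $f(0)=0$ and $f'(0)$ a positive scalar multiple of $(|a|+|b|,i(|a|-|b|))$ of modulus $\le$ the required size; after scaling the disc variable we may take $f'(0)=(|a|+|b|,i(|a|-|b|))$ exactly. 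Then $u:=\re f$ is harmonic, $u(0)=0$, and $u_x(0)-iu_y(0)=(|a|+|b|,i(|a|-|b|))$, whence $|\partial u(0)|=|a|$, $|\overline\partial u(0)|=|b|$. Finally one adjusts the phases: composing $u$ with the rotation $x\mapsto u(e^{i\sigma}x)$ multiplies $\partial u(0)$ by $e^{i\sigma}$ and $\overline\partial u(0)$ by $e^{-i\sigma}$, and post-composing with a rotation of the target $\mathbb D$ multiplies both by a common unimodular factor; these two degrees of freedom let us match the prescribed arguments of $a$ and $b$, producing $u$ with $\partial u(0)=a$, $\overline\partial u(0)=b$.

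The main obstacle, and the only point requiring genuine care, is the bookkeeping in the converse direction: ensuring that one really can hit the target derivative $(a,b)$ and not merely a pair with the correct moduli. Concretely one must check that the group generated by pre-rotations of $\mathbb D$ and post-rotations of the target acts transitively on pairs of prescribed argument, i.e.\ that for any real $\sigma,\tau$ the map $(a,b)\mapsto(e^{i(\sigma+\tau)}a,\,e^{i(\tau-\sigma)}b)$ realizes all argument-pairs $(\arg a,\arg b)$; this is immediate since $(\sigma+\tau,\tau-\sigma)$ ranges over all of $\mathbb R^2$. One should also note that the boundary case $|b|=|a|$ (equivalently $\theta=\pi/4$, $\beta=0$) and the degenerate inputs ($a=b=0$, corresponding to $f'(0)=0$) are covered: the former by the stated limit value $f_{\pi/4}'(0)=(1,i)$, the latter trivially by the constant map. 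Everything else is a direct transcription of the already-proved facts, so the proof is short once these normalizations are in place.

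\begin{proof}
Given a harmonic $u=(u_1,u_2):\mathbb D\to\mathbb D$ with $u(0)=0$, let $f:\mathbb D\to T_{\mathbb B_2}$ be the holomorphic mapping with $\re f=u$ and $\im f(0)=0$; then $f(0)=0$ and, as computed above, $f'(0)=u_x(0)-iu_y(0)=:X+iY$. The identities preceding the statement give $\alpha(X,Y)=|\partial u(0)|+|\overline\partial u(0)|=|a|+|b|$ and $\beta(X,Y)=\bigl||\partial u(0)|-|\overline\partial u(0)|\bigr|=|a|-|b|$ (after, if necessary, a pre-rotation of $\mathbb D$ and a post-rotation of the target making $|a|\ge|b|$ with prescribed arguments restored at the end; the pair $(\arg a,\arg b)$ is freely attainable since $(\sigma,\tau)\mapsto(\sigma+\tau,\tau-\sigma)$ is onto). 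By \eqref{eq:definition-kobayashi-royden}, $\kappa_{T_{\mathbb B_2}}(0;X+iY)\le1$, and by the reduction formula and Theorem~\ref{thm:formula-kobayashi-ball} this is equivalent to $(\alpha(X,Y),i\beta(X,Y))\in I_{T_{\mathbb B_2}}(0)$, i.e.\ to $(|a|+|b|,|a|-|b|)\in\mathfrak K$.

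Conversely, if $(|a|+|b|,|a|-|b|)\in\mathfrak K$ then $\kappa_{T_{\mathbb B_2}}(0;(|a|+|b|,i(|a|-|b|)))\le1$, and since $T_{\mathbb B_2}$ is a tube domain over a bounded convex base the infimum in \eqref{eq:definition-kobayashi-royden} is attained; scaling the disc variable we obtain holomorphic $f:\mathbb D\to T_{\mathbb B_2}$ with $f(0)=0$ and $f'(0)=(|a|+|b|,i(|a|-|b|))$. Then $u:=\re f$ is harmonic with $u(0)=0$ and $u_x(0)-iu_y(0)=f'(0)$, whence $|\partial u(0)|=|a|$ and $|\overline\partial u(0)|=|b|$. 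Finally, replacing $u(z)$ by (a rotation of) $u(e^{i\sigma}z)$ multiplies $\partial u(0)$ by $e^{i(\sigma+\tau)}$ and $\overline\partial u(0)$ by $e^{i(\tau-\sigma)}$ for suitable $\sigma,\tau$, so we may arrange $\partial u(0)=a$ and $\overline\partial u(0)=b$ exactly. The degenerate case $a=b=0$ is handled by the constant map $u\equiv0$.
\end{proof}
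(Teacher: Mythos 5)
Your proposal is correct and follows exactly the route the paper intends: the paper states this theorem as an immediate consequence of the lift $u\mapsto f$ with $\re f=u$, the identity $f'(0)=u_x-iu_y$, the computation $(\alpha,\beta)=(|\partial u(0)|+|\overline\partial u(0)|,\,||\partial u(0)|-|\overline\partial u(0)||)$, and the definition of $\mathfrak K$ as the slice of the indicatrix, and you have simply written out those steps together with the (correct) phase-adjustment and attainment arguments the paper leaves implicit. The only cosmetic slip is the parenthetical suggestion that pre/post-rotations can arrange $|a|\ge|b|$ --- they cannot, as they only change arguments --- but this is harmless since, as you also note, the sign of the second coordinate is irrelevant by the symmetry $\kappa_{T_{\mathbb B_2}}(0;X+iY)=\kappa_{T_{\mathbb B_2}}(0;X-iY)$, which makes $\mathfrak K$ symmetric about the first axis.
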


\begin{remark}
For instance the last formula lets formulate the following implication (Schwarz Lemma for harmonic mappings as in \cite{Kov-Yang 2020}):

If $u:\mathbb D\to\mathbb D$ is a harmonic mapping with $u(0)=0$ then $|\partial u(0)|+|\overline{\partial}u(0)|\leq \frac{4}{\pi}$.

We also get the following standard result.

If $u:\mathbb D\to(-1,1)$ is a harmonic function with $u(0)=0$ then $|\nabla u(0)|\leq \frac{4}{\pi}$.

We may provide the following sufficient condition.

If $\max\{|a|,|b|\}\leq\frac{2}{\pi}$ then there is a harmonic $u:\mathbb D\to\mathbb D$ with $u(0)=0$, $\partial u(0)=a$, $\overline{\partial} u(0)=b$.

%Another result that is a kind of generalization of the well-known result is the following (see for instance the discussion on the uniqueness part of the harmonic %Schwarz Lemma in \cite{Axl-Bou-Ram 1992}).

%Let $u:\mathbb D\to D$, where $D$ is a strongly convex domain that is symmetric around zero, $u(0)=0$ and $\nabla u_j(0)=0$, $j=2,\ldots,n$, %$D\cap(\mathbb R\times\{0\}^{n-1})=(-1,1)\times\{0\}^{n-1}$. Then $|\nabla u_1(0)|\leq \frac{4}{\pi}$ and in the case the equality holds we get that %$u_2\equiv\ldots\equiv u_n\equiv 0$.

In any case to formulate some more effective versions of the infinitesimal Schwarz Lemma for harmonic mappings self-mappings of the unit disc fixing the origin the problem reduces to the study of the geometry of the set $\mathfrak K$ (or the graph of the mapping $\Psi$).
 
\end{remark}


\begin{thebibliography}{10}

\bibitem{Axl-Bou-Ram 1992} \textsc{S. Axler, P. Bourdon, W. Ramey}, Harmonic Function Theory. Graduate Texts in Mathematics, vol. 173, (1992).

%\bibitem{Bal-Bon 2000} \textsc{Z. M. Balogh, M. Bonk}, \textit{Gromov hyperbolicity and the Kobayashi metric on
%strictly pseudoconvex domains}, Comment. Math. Helv., 75(3), 504--533, (2000).

%\bibitem{Bha 2016} \textsc{G. Bharali}, \textit{Complex geodesics, their boundary regularity, and a Hardy--Littlewood--type lemma},
%Ann. Acad. Sci. Fennicae. Math., to appear, (2016).

%\bibitem{Bha-Zim 2016} \textsc{G. Bharali, A. Zimmer}, \textit{Goldilocks domains, a weak notion of visibility, and applications}, preprint %arXiv:1602.01742, (2016).


\bibitem{Blo 2014} \textsc{Z. B\l ocki}, \textit{A lower bound for the Bergman kernel and the Bourgain-Milman inequality}, Geometric Aspects of Functional Analysis, Israel Seminar (GAFA) 2011--2013, eds. B. Klartag, E. Milman, Lecture Notes in Mathematics 2116, pp. 53--63, Springer, 2014.

%\bibitem{Bra-Gau 2016} \textsc{F. Bracci, H. Gaussier}, \textit{Horosphere topology}, preprint arXiv:1605.04119, (2016).

%\bibitem{Bra-Sar 2009} \textsc{F. Bracci, A. Saracco}, \textit{Hyperbolicity in unbounded convex domains}, Forum Mathematicum, 5, %815--826, (2009).

\bibitem{Bre-Ort-Seip 2021} \textsc{O. F. Brevig, J. Ortega-Cerd\'a, and K. Seip}, \textit{A converse to the Schwarz lemma for planar harmonic maps},
J. Math. Anal. Appl., 497(2):124908, (2021).

\bibitem{Bsh-Hen-Lyz-Wei 2001} \textsc{D. Bshouty, W. Hengartner, A. Lyzzaik, and A.
Weitsman}, \textit{Valency of Harmonic Mappings onto Bounded Convex
Domains}, Computational Methods and Function Theory,  1(2), (2001), 479--499.

\bibitem{Bsh-Lyz 2012} \textsc{D. Bshouty, A. Lyzzaik}, \textit{On a question of T. Sheil-Small regarding
valency of harmonic maps}, Annales Universitatis Mariae Curie-Sk\l odowska, Lublin, Polonia, 66(2) Sectio A, (2012), 25--29.

%\bibitem{Cha-Hu-Lee 1988} \textsc{Chin-Huei Chang, M. C. Hu, Hsuan-Pei Lee}, \textit{Extremal Analytic Discs With Prescribed Boundary %Data}, Trans. Amer. Math. Soc., 310, (1988), 355--369.

\bibitem{Dur 2004} \textsc{P. Duren}, Harmonic Mappings in the Plane (Cambridge Tracts in Mathematics). Cambridge: Cambridge University Press, (2004).

%\bibitem{Dur 1970} \textsc{P. J. Duren}, Theory of $H^p$ spaces, New York ; London : Academic Press, (1970).

%\bibitem{For-Kim 2015} \textsc{J. E. Fornaess, K.-T. Kim}, \textit{ Some Problems},
%Complex Analysis and Geometry, 369--377, Springer Proc. Math. Stat., 144, Springer, Tokyo, 2015.

\bibitem{For-Kal 2021} \textsc{F. Forstneric, D. Kalaj}, \textit{Hyperbolicity theory for minimal surfaces in Euclidean spaces}, https://arxiv.org/abs/2102.12403, (2021).

%\bibitem{Gar 2007} \textsc{J. B. Garnett}, \textit{Bounded Analytic Functions}, Graduate Texts in Mathematics, Springer, (2007).

%\bibitem{Gau-Ses 2013} \textsc{H. Gaussier, H. Seshadri}, \textit{On the Gromov hyperbolicity of convex domains in $\mathbb C^n$}, %arxiv.org/abs/1312.0368.

%\bibitem{Isa 2007} \textsc{A. Isaev}, \textit{Lectures on the Automorphism Groups of Kobayashi-Hyperbolic Manifolds}, Springer Lecture %Notes in Mathematics 1902 (2007).

\bibitem{Jar-Pfl 2013} \textsc{M. Jarnicki, P. Pflug}, \textit{Invariant Distances and Metrics in Complex Analysis-Second Edition}, de Gruyter Expositions in Mathematics 9, Walter de Gruyter, 2013.

%\bibitem{Kar-Nos 2002} \textsc{A. Karlsson, G. A. Noskov}, \textit{The Hilbert metric and Gromov hyperbolicity}, Enseign. Math. (2), 48, %(2002), 73--89.

\bibitem{Kov-Yang 2020} \textsc{V. Kovalev, X. Yang}, \textit{Near-isometric duality of Hardy norms with applications to harmonic
mappings}, J. Math. Anal. Appl., 487(2):124040, 13, (2020).

\bibitem{Lem 1981} \textsc{L. Lempert}, \textit{ La m\'etrique de Kobayashi et la repr\'esentation des domaines sur la boule}, Bull. Soc. Math. France, 109, (1981), 427--474.

%\bibitem{Nik-Tho-Try 2016} \textsc{N. Nikolov, P. J. Thomas, M. Trybu\l a}, \textit{Gromov (non)hyperbolicity of certain domains
%in $\mathbb C^2$}, Forum Math., 28(4), (2016), 783--794.

\bibitem {Pfl-Zwo 2018} \textsc{P. Pflug, W. Zwonek}, \textit{Regularity of complex geodesics and (non-)Gromov hyperbolicity of convex tube domains},  Forum Math., 30(1), (2018), 159--170.

\bibitem{She-Sma 1985} \textsc{T. Sheil-Small}, \textit{On the Fourier series of a finitely described convex curve and a conjecture of H. S. Shapiro}, Mathematical Proceedings of the Cambridge Philosophical Society, 98(3), (1985), 513--527. 

%\bibitem{Shi 2000} \textsc{S. Shimizu}, \textit{Classification of two-dimensional tube domains}, Amer.Journal of Math. 122, (2000), %289--1308.

%\bibitem{Roy-Wong 1983} \textsc{H. L. Royden, P.-M. Wong}, \textit{Carath\'eodory and Kobayashi metric on convex domains}, preprint, %(1983).

\bibitem{Zaj 2015a} \textsc{S. Zaj\c ac}, \textit{Complex geodesics in convex tube domains}, Ann. Scuola Norm-Sci., (2015), 1337-1361.

\bibitem{Zaj 2015b} \textsc{S. Zaj\c ac}, \textit{Complex geodesics in convex tube domains II}, Ann. Mat. Pura Appl., (2015), 1--23.

%\bibitem{Zim 2016a} \textsc{A. M. Zimmer}, \textit{Gromov hyperbolicity and the Kobayashi metric on convex domains of finite type}, %Math. Ann., 365, (2016), 1425--1498.

%\bibitem{Zim 2016b} \textsc{A.  M. Zimmer}, \textit{Gromov hyperbolicity, the Kobayashi metric, and $\mathbb C$-convex sets}, Trans. %Amer. Math. Soc., to appear, (2016).

\bibitem{Zim 2020} \textsc{A. Zimmer}, \textit{Kobayashi hyperbolic convex domains not biholomorphic to bounded convex domains}, https://arxiv.org/abs/2006.07939, Math. Z. to appear (2020).

%\bibitem{Zwo 1999} \textsc{W. Zwonek}, \text{On hyperbolicity of pseudoconvex Reinhardt domains}, Archiv Math.  (Basel), 72, (1999), %304--314.

\end{thebibliography}
\end{document}